\newtheorem{thm}{Theorem}[section]
\newtheorem*{thm*}{Theorem}
\newtheorem{prop}[thm]{Proposition}
\newtheorem{lem}[thm]{Lemma}
\newtheorem*{lem*}{Lemma}
\newtheorem*{cor*}{Corollary}
\theoremstyle{definition}
\newtheorem{def_n}[thm]{Definition}
\newtheorem{cor}{Corollary}[section]
\newtheorem{ex}[thm]{Example}
\newtheorem{rem}[thm]{Remark}
\numberwithin{equation}{section}
\newcommand{\Hcal}{\mathcal{H}} 
\newcommand{\N}{\mathbb{N}}
\title{Sequences that do frame reconstruction}
\author{Chad Berner}
\begin{document}

\begin{abstract}
Frames allow all elements of a Hilbert space to be reconstructed by inner product data in a stable manner. Recently, there is interest in relaxing the definition of frames to understand the implications for stable signal recovery. In this paper, we relax the definition of a frame by allowing the operator in the frame decomposition formula to not be invertible. We provide a complete classification of sequences that allow this decomposition via a type of frame operator. Additionally, we provide several examples of sequences that allow this reconstruction property that are not frames and illustrate in which ways they fail to be frames. Furthermore, we provide a Paley-Wiener type stability result for sequences that do this frame-like reconstruction, which is also stable under the non-frame property. Finally, we classify certain Schauder bases—such as unconditional and exponential bases—that satisfy this relaxed frame reconstruction condition.
\end{abstract}

\maketitle

\section{Introduction}
Given a sequence of vectors $\{f_n\}$ in a Hilbert space $\Hcal$, one can ask if $\{f_n\}$ can recover any signal $f\in \Hcal$ via its inner product data $\{\langle f, f_n\rangle\}$ in a stable way. Frame theory provides a strong approach to answer this question, and its origin began with Duffin and Schaeffer \cite{Duffin1952Class}. Specifically, if $\{f_n\}_{n=0}^{\infty}\subseteq \Hcal$ is a frame, then there is a positive definite invertible $S\in B(\Hcal)$ called the \textbf{frame operator} that allows stable reconstruction of all signals in $\Hcal$. Formally, the most important aspect of frames is the frame decomposition, which holds for all $f\in \Hcal$:
\begin{equation}\label{framedecomp}
f=\sum_{n=0}^{\infty} \langle f, S^{-1}f_n\rangle f_n=\sum_{n=0}^{\infty}\langle S^{-1}f,f_n\rangle f_n=\sum_{n=0}^{\infty} \langle f, f_n\rangle S^{-1}f_n.    
\end{equation}
Furthermore, these series converge unconditionally.

An equivalent definition of a frame, which we will prove shortly, is the existence of an invertible $B\in B(\Hcal)$ such that
for all $f\in \Hcal$:
$$f=\sum_{n=0}^{\infty} \langle f, Bf_n\rangle f_n.$$ However, if we only require that there is a $B\in B(\Hcal)$ such that
for all $f\in \Hcal$:
\begin{equation}\label{doesframereconintro}
f=\sum_{n=0}^{\infty} \langle f, Bf_n\rangle f_n,\end{equation} then $\{f_n\}_{n=0}^{\infty}$ need not be a frame, although $B$ must be injective. Intuitively, the property described in equation \eqref{doesframereconintro} means $\{f_n\}_{n=0}^{\infty}$ is as close to being a frame as possible without being a frame. In this case, we say $\{f_n\}_{n=0}^{\infty}$ does \textbf{frame reconstruction}. We will also show that a sequence that does frame reconstruction still admits a frame decomposition via a bounded operator in the sense of equation \eqref{framedecomp}; therefore, sequences that do frame reconstruction still recover signals in a bounded and stable way. Our analysis will show that sequences that do frame reconstruction are quite different from frames even though they share many of their desirable properties.

There is considerable interest in expanding the definition of frames and understanding the implications for stable signal recovery. For example, Christensen provided results on frame-like decompositions involving unbounded operators \cite{Christensen1995Frames}. Additionally, Li and Ogawa relaxed the definition of frames in \cite{Li2001Pseudo-duals}, studying the implications when convergence of frames series is merely in the weak topology. Furthermore, frame-like series built from the Kaczmarz algorithm have been investigated, see \cite{Haller2005Kaczmarz},\cite{Herr2017Fourier},\cite{Kwapien2001Kaczmarz}. Moreover, Balazs, Antoine, and Grybos along with many others have studied weighted frames, that is, sequences that can be converted to frames via positive scalars and their applications to Fourier expansions, see \cite{Balazs2010Weighted},\cite{Dutkay2016Weighted}. In this work, we will establish some connections between sequences that do frame reconstruction, weighted frames, and sequences that arise from the Kaczmarz algorithm. 

The primary purpose of this paper is to establish the properties that sequences that do frame reconstruction have in relation to frames and provide explicit classification and examples of such sequences. The secondary purpose of this paper is to relate sequences that do frame reconstruction to other generalizations of frames and to classify certain Schauder bases that do frame reconstruction.

The outline of this paper is as follows: In section \ref{s3}, utilizing concrete examples, we establish both the shared properties of frames and sequences that do frame reconstruction, as well as the properties that distinguish them. We also give a classification of sequences that do frame reconstruction using a frame operator. Then, in section \ref{s4}, we present two examples of sequences that do not do frame reconstruction and identify the specific properties of frame reconstruction they fail to satisfy. Furthermore, in section \ref{s5}, we provide a Paley-Wiener type stability result for sequences that do frame reconstruction, which is also stable under the non-frame property. Additionally, in section \ref{s6}, we relate sequences that do frame reconstruction with a special class of weighted frames. Then, in section \ref{s7}, we classify all unconditional Schauder bases do frame reconstruction. Finally, in section \ref{s8}, we classify the finite Borel measures on the torus for which the classical exponential system constitutes a Schauder basis that enables frame reconstruction.
\section{Preliminaries}
We begin by providing the definition of a frame:
\begin{def_n}
Let $\{f_{n}\}_{n}\subseteq \Hcal$ where $\Hcal$ is a Hilbert space.
\begin{enumerate}
\item If there exists $A>0$ such that
$$A\|f\|^{2}\leq \sum_{n}|\langle f, f_{n}\rangle|^{2}$$
for all $f\in \Hcal$, then $\{f_{n}\}_{n}$ is called a \textbf{lower semi-frame}.
\item If there exists $B>0$ such that
$$\sum_{n}|\langle f, f_{n}\rangle|^{2}\leq B\|f\|^{2}$$
for all $f\in \Hcal$, then $\{f_{n}\}_{n}$ is called a \textbf{Bessel sequence}.
\item If there exists $A,B>0$ such that
    $$A\|f\|^{2}\leq \sum_{n}|\langle f, f_{n}\rangle|^{2}\leq B\|f\|^{2}$$
    for all $f\in \Hcal$, then $\{f_{n}\}_{n}$ is called a \textbf{frame}.
\item If
$$\|f\|^{2}=\sum_{n}|\langle f, f_{n}\rangle|^{2}$$
for all $f\in \Hcal$, then $\{f_{n}\}_{n}$ is called a \textbf{Parseval frame}.
\item If $\{f_{n}\}$ is a frame, and
$$\sum_{n}c_{n}f_{n}=0\implies \{c_{n}\}=0,$$
whenever $\{c_{n}\}\in \ell^{2}$,
then $\{f_{n}\}_{n}$ is called a \textbf{Riesz basis}.
\end{enumerate}
Furthermore, if $\{f_{n}\}_{n}$ is a Bessel sequence in Hilbert space $\Hcal$,
the map on $\Hcal$: $f\to \{\langle f, f_{n}\rangle \}_{n}$ is called the \textbf{analysis operator}, its adjoint is called the \textbf{synthesis operator}, and 
we denote $S: \Hcal\to \Hcal$ as the positive operator such that
$$S(f)=\sum_{n}\langle f, f_{n}\rangle f_{n},$$ which is called the \textbf{frame operator}.
Additionally, if $\{f_{n}\}_{n}$ is a frame, then its associated frame operator is invertible, and $\{S^{-1}f_{n}\}_{n}$ is a frame such that
$$f=\sum_{n}\langle f, S^{-1}f_{n}\rangle f_{n}=\sum_{n}\langle f, f_{n}\rangle S^{-1}f_{n}$$ with convergence in norm for all $f\in \Hcal$.

Additionally, if $\{f_{n}\}$ and $\{g_{n}\}$ are frames (Bessel sequences) such that for all $f\in \Hcal$,
$$f=\sum_{n}\langle f, f_{n}\rangle g_{n},$$ then $\{f_{n}\}$ and $\{g_{n}\}$ are called \textbf{dual frames}. In particular, many frames have multiple dual frames. Additionally, because the synthesis operator and analysis operator associated with both sequences are bounded, by an adjoint calculation, 
$$f=\sum_{n}\langle f,g_{n}\rangle f_{n}$$ also holds for all $f\in \Hcal$. Finally, the norm convergence of frame series is also unconditional.
\end{def_n}

Furthermore, we define Schauder bases:
\begin{def_n}
Let $\{f_{n}\}_{n}\subseteq \Hcal$ where $\Hcal$ is a Hilbert space. $\{f_n\}_{n}$ is called a \textbf{Schauder basis} if there is a $\{g_{n}\}_{n}\subseteq \Hcal$ such that
$$\langle f_n,g_k\rangle=\delta_{nk}$$ and
\begin{equation}\label{schauderdef}
f=\sum_{n}\langle f,g_n\rangle f_n
\end{equation}
with convergence in norm for all $f\in \Hcal$.
If the convergence in equation \eqref{schauderdef} is unconditional for all $f\in \Hcal$, then $\{f_n\}_n$ is called an \textbf{unconditional Schauder basis}.
\end{def_n}
\begin{rem}
A Schauder basis may not be a frame, and a frame may not be a Schauder basis. However, Riesz bases are exactly the sequences that are both frames and Schauder bases.
\end{rem}

Additionally, we will make good use of the following results about Riesz bases and unconditional convergence where proofs can be found in \cite{Gohberg1969Theory}:
\begin{thm*}[Gohberg]
Let $\Hcal$ be a Hilbert space. $\{f_n\}\subseteq \Hcal$
is a Riesz basis if and only if it
is an unconditional Schauder basis and
$$0 < \inf_{n}
\|f_n\| \leq \sup_{n}
\|f_n\| < \infty.$$
\end{thm*}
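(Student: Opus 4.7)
The plan is to handle the two directions using Rademacher averaging, which in a Hilbert space produces the clean identity $\mathbb{E}\bigl\|\sum_n \epsilon_n a_n f_n\bigr\|^2 = \sum_n |a_n|^2 \|f_n\|^2$ (all cross-terms vanish in expectation because $\mathbb{E}[\epsilon_m \epsilon_n]=\delta_{mn}$). Combined with uniform control on sign-flipping operators, this identity makes the equivalence between an unconditional basis and an $\ell^2$-isomorphism essentially automatic.

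For the forward implication, suppose $\{f_n\}$ is a Riesz basis. Then it is a frame, so the frame decomposition \eqref{framedecomp} holds with unconditional convergence. The $\ell^2$-linear independence built into the paper's definition forces the canonical dual $\{S^{-1}f_n\}$ to be biorthogonal to $\{f_n\}$: applying \eqref{framedecomp} to $f_k$ produces coefficients $\langle f_k, S^{-1}f_n\rangle$, while $f_k=\sum_n \delta_{nk} f_n$ is also a valid $\ell^2$-expansion, and $\ell^2$-uniqueness makes the two coefficient sequences agree. Hence $\{f_n\}$ is an unconditional Schauder basis with biorthogonal system $\{S^{-1}f_n\}$. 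For the norm bounds, evaluating the frame inequalities at $f=f_n$ gives $\|f_n\|^4 \leq \sum_k |\langle f_n, f_k\rangle|^2 \leq B\|f_n\|^2$, so $\|f_n\|\leq \sqrt B$; biorthogonality combined with $\|S^{-1}\|\leq A^{-1}$ yields $1=\langle f_n, S^{-1}f_n\rangle \leq A^{-1}\|f_n\|^2$, so $\|f_n\|\geq \sqrt A$.

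For the backward implication, assume $\{f_n\}$ is an unconditional Schauder basis with $0<c\leq\|f_n\|\leq C<\infty$. A standard uniform boundedness argument applied to the sign-change operators $M_\epsilon : \sum_n a_n f_n \mapsto \sum_n \epsilon_n a_n f_n$ yields an unconditional constant $K$ with $\|M_\epsilon\|\leq K$ for every $\epsilon\in\{\pm 1\}^{\N}$. Combining this with the Rademacher identity in both directions gives
\[
K^{-2}\bigl\|\textstyle\sum_n a_n f_n\bigr\|^2 \leq \sum_n |a_n|^2 \|f_n\|^2 \leq K^{2}\bigl\|\textstyle\sum_n a_n f_n\bigr\|^2,
\]
and the uniform norm bounds $c\leq \|f_n\|\leq C$ collapse this to $\sum_n |a_n|^2 \asymp \|\sum_n a_n f_n\|^2$. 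Hence the synthesis map $T:\ell^2\to\Hcal$, $\{a_n\}\mapsto\sum_n a_n f_n$, is a bounded invertible operator onto $\Hcal$ (range dense by the Schauder basis property, closed by the lower estimate). Writing $\{f_n\}=\{Te_n\}$ for the canonical ONB $\{e_n\}$ of $\ell^2$, one reads off frame bounds $\|T^{-1}\|^{-2}$ and $\|T\|^2$ from $\sum_n |\langle f, f_n\rangle|^2=\|T^*f\|^2$, and the $\ell^2$-linear independence from the injectivity of $T$; this is exactly the paper's definition of a Riesz basis.

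I expect the main obstacle to be extracting the unconditional basis constant $K$ from mere unconditional convergence. This requires identifying the right family of bounded operators—the sign-flipping projections $M_\epsilon$—and first showing each one is well-defined on all of $\Hcal$ (which itself uses unconditional convergence together with a partial-sum uniform boundedness argument), before a second application of the uniform boundedness principle yields a single $K$ valid for all $\epsilon$. After that step the Hilbert-space Rademacher identity carries all the weight; in a general Banach space one would instead get only a Khintchine-type inequality that does not produce an $\ell^2$-equivalence, which is why this characterization is genuinely a Hilbert-space statement.
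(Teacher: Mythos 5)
Your argument is correct, but note that the paper itself contains no proof of this statement: it is quoted as a known theorem of Gohberg(--Krein) with the proof deferred to the cited book, so there is no in-paper argument to compare against; your route is the standard self-contained one. In the forward direction you correctly exploit the paper's definition of a Riesz basis (frame plus $\ell^2$-independence) to force biorthogonality of $\{S^{-1}f_n\}$ --- both $\{\delta_{nk}\}_n$ and $\{\langle f_k,S^{-1}f_n\rangle\}_n$ are $\ell^2$ coefficient sequences representing $f_k$, hence coincide --- and the bounds $\sqrt{A}\le\|f_n\|\le\sqrt{B}$ follow exactly as you say. In the backward direction, the combination of the unconditional basis constant $K$ (Banach--Steinhaus applied to the finite sign-flip partial-sum operators, which are bounded because the coefficient functionals are inner products with the vectors $g_n\in\Hcal$, and pointwise bounded via the Cauchy criterion for unconditional convergence) with the averaging identity $\mathbb{E}\|\sum_n\epsilon_n a_nf_n\|^2=\sum_n|a_n|^2\|f_n\|^2$ is the classical mechanism; the only point worth making explicit is that the identity and the resulting two-sided estimate should first be proved for finitely supported $\{a_n\}$ and then extended, which simultaneously gives convergence of $\sum_n a_nf_n$ for every $\{a_n\}\in\ell^2$ and hence that the synthesis operator $T$ is bounded, bounded below, and onto. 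Injectivity of $T$ yields the $\ell^2$-independence required by the paper's definition, and $\sum_n|\langle f,f_n\rangle|^2=\|T^*f\|^2$ yields the frame bounds $\|T^{-1}\|^{-2}$ and $\|T\|^2$, so your proof is complete; what it buys is a self-contained verification of a result the paper only cites.
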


\begin{thm*}[Gohberg]
Let $\Hcal$ be a Hilbert space and $\{f_n\}_{n\in I}\subseteq \Hcal$. If $\sum_{n\in I}f_n$ converges unconditionally, then
$$\sum_{n\in I}\|f_n\|^{2}<\infty.$$
\end{thm*}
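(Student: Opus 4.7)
The plan is to use the Rademacher (random $\pm 1$ signs) trick, which in a Hilbert space cleanly converts a uniform bound on signed subsums into an $\ell^2$ bound on norms, because independent Rademacher variables are orthogonal in $L^2$.

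First, I would upgrade unconditional convergence to a uniform bound on signed finite partial sums. Unconditional convergence of $\sum_{n \in I} f_n$ is equivalent, in any Banach space, to convergence of the net $F \mapsto \sum_{n \in F} f_n$ indexed by finite subsets $F \subseteq I$; in particular this net is bounded, say by $M$. Applying the same fact to every subseries and splitting
\[ \sum_{n \in F} \epsilon_n f_n \;=\; \sum_{n \in F,\,\epsilon_n = +1} f_n \;-\; \sum_{n \in F,\,\epsilon_n = -1} f_n \]
gives, for any sign sequence $\epsilon \in \{-1,+1\}^I$ and any finite $F \subseteq I$,
\[ \Bigl\|\sum_{n \in F} \epsilon_n f_n\Bigr\| \;\leq\; 2M. \]

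Next, I would fix a finite $F$ and average the squared norm over i.i.d.\ Rademacher signs $\epsilon_n$. Expanding the inner product and using $\mathbb{E}[\epsilon_i \epsilon_j] = \delta_{ij}$,
\[ \sum_{n \in F} \|f_n\|^2 \;=\; \mathbb{E}\,\Bigl\|\sum_{n \in F} \epsilon_n f_n\Bigr\|^2 \;\leq\; 4M^2. \]
Taking the supremum over finite $F \subseteq I$ yields $\sum_{n \in I} \|f_n\|^2 \leq 4M^2 < \infty$.

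The main obstacle is the first step: verifying that unconditional convergence forces uniform boundedness of \emph{all} signed finite subsums, not just of the partial sums for one particular rearrangement. The cleanest route is the Orlicz--Pettis-type equivalence between unconditional, subseries, and net convergence, which is valid in any Banach space; once it supplies the bound $M$, the remaining Hilbert space computation is elementary and no deeper Banach space machinery is needed.
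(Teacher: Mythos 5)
Your proof is correct; note that the paper does not prove this statement at all but simply cites Gohberg--Krein for it, so there is no internal proof to compare against. Your argument is the classical one (essentially Orlicz's): reduce to a uniform bound on all signed finite subsums, then average over independent Rademacher signs and use orthogonality of the $\epsilon_n$ in $L^2$ to convert $\mathbb{E}\bigl\|\sum_{n\in F}\epsilon_n f_n\bigr\|^2\le 4M^2$ into $\sum_{n\in F}\|f_n\|^2\le 4M^2$ for every finite $F$. The only place that deserves one more line is the step you yourself flag: net convergence of $F\mapsto\sum_{n\in F}f_n$ does not by itself make a net bounded (convergent nets can be unbounded on ``early'' indices), so you should extract the bound $M$ explicitly. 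This is easy: take $F_0$ from the Cauchy criterion so that $\bigl\|\sum_{n\in F}f_n - S\bigr\|\le 1$ for all finite $F\supseteq F_0$, and for an arbitrary finite $F$ write
\[
\sum_{n\in F}f_n \;=\; \sum_{n\in F\cup F_0}f_n \;-\; \sum_{n\in F_0\setminus F}f_n,
\]
where the first term is bounded by $\|S\|+1$ and the second ranges over the finitely many subsums of the fixed finite set $F_0$. With that observation supplied, the rest of your computation is airtight, and the splitting of $\sum_{n\in F}\epsilon_n f_n$ into the two subsums indexed by $\epsilon_n=\pm1$ correctly yields the $2M$ bound you use.
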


We will also utilize the following result that provides a condition for when operators have a bounded right inverse, see \cite{Beutler1976Operator} for a proof.
\begin{thm*}[Beutler]
Let $\Hcal$ be a Hilbert space and suppose that $L: D(L)\subseteq \mathcal{H}\to H$ is linear, closed, surjective, and $\overline{D(L)}=\mathcal{H}$. Then there is $L^{\dagger}\in B(\mathcal{H})$ such that for all $f\in \Hcal$,
$$LL^{\dagger}f=f.$$
\end{thm*}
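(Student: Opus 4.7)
The plan is to construct $L^{\dagger}$ as a Moore--Penrose style right inverse: first cut $L$ down to a bijection between a suitable subspace of $D(L)$ and $\Hcal$, then invert it, and finally appeal to the closed graph theorem to establish boundedness of the inverse.

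Because $L$ is closed, its null space $N(L)$ is closed in $\Hcal$, so $N(L)^{\perp}$ is a closed subspace as well. Define $M := D(L) \cap N(L)^{\perp}$ and consider the restriction $L_{0} := L|_{M}$. This restriction is injective, since any $f \in M$ with $L_{0} f = 0$ lies in $N(L) \cap N(L)^{\perp} = \{0\}$. It is also surjective onto $\Hcal$: given $g \in \Hcal$, surjectivity of $L$ produces some $h \in D(L)$ with $Lh = g$; orthogonally decomposing $h = h_{1} + h_{2}$ with $h_{1} \in N(L)^{\perp}$ and $h_{2} \in N(L) \subseteq D(L)$ yields $h_{1} = h - h_{2} \in D(L) \cap N(L)^{\perp} = M$ and $L_{0} h_{1} = Lh - Lh_{2} = g$.

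Next, I would verify that $L_{0}$ remains a closed operator: if $f_{n} \in M$ with $f_{n} \to f$ and $L_{0} f_{n} \to g$ in $\Hcal$, then closedness of $L$ forces $f \in D(L)$ and $Lf = g$, while closedness of $N(L)^{\perp}$ forces $f \in N(L)^{\perp}$, so $f \in M$ and $L_{0} f = g$. Since $L_{0}$ is a closed bijection from $M$ onto $\Hcal$, the inverse $L_{0}^{-1}$ is everywhere defined on $\Hcal$ and has closed graph (its graph is that of $L_{0}$ with coordinates swapped). The closed graph theorem then gives $L_{0}^{-1} \in B(\Hcal)$, and setting $L^{\dagger} := L_{0}^{-1}$ yields the desired right inverse, since $L L^{\dagger} f = L_{0} L_{0}^{-1} f = f$ for all $f \in \Hcal$.

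The main obstacle to watch for is ensuring that the restricted operator $L_{0}$ inherits closedness after carving out the kernel; this is precisely where the closedness of both $L$ and $N(L)^{\perp}$ (the latter coming from closedness of $N(L)$, itself a consequence of $L$ being closed) combine. Once that structural point is in place, the result reduces to the closed graph theorem. The density hypothesis $\overline{D(L)} = \Hcal$ is not explicitly used in this argument, so it either functions as a convenient normalization or is relevant in related statements about adjoints in Beutler's original treatment.
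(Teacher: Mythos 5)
Your argument is correct. The paper itself does not prove this statement—it only cites Beutler's article—so there is no in-paper proof to compare against; what you have written is the standard generalized-inverse (Moore--Penrose style) construction, and every step checks out: closedness of $L$ gives closedness of $N(L)$, hence the orthogonal decomposition $\Hcal = N(L) \oplus N(L)^{\perp}$ used to prove surjectivity of $L_{0} = L|_{D(L)\cap N(L)^{\perp}}$; the restriction inherits closedness exactly as you argue; and the closed graph theorem applied to the everywhere-defined inverse $L_{0}^{-1}$ gives boundedness, so $L^{\dagger} := L_{0}^{-1}$ is a bounded right inverse. Your closing observation is also accurate: the density of $D(L)$ is not needed for the bare right-inverse conclusion (your $L^{\dagger}$ even has the extra feature of selecting the minimal-norm solution, since its range lies in $N(L)^{\perp}$); density matters in Beutler's original treatment for adjoint-based descriptions of the pseudoinverse, and in this paper it is simply part of the setting in which the theorem is invoked.
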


\section{Sequences that do frame reconstruction}\label{s3}
In this section, we make precise the definition of a sequence doing frame reconstruction. We will also provide examples of sequences that do frame reconstruction that are not frames as well as compare the properties of frames and sequences that do frame reconstruction. Finally, we close this section with a classification of sequences that do frame reconstruction using a frame operator.
\begin{def_n}
Let $\Hcal$ be a Hilbert space. A sequence \(\{f_n\}_{n=0}^\infty \subset \Hcal\) \textbf{does frame reconstruction with $B\in B(\Hcal)$} if for all \(f \in \Hcal\),
\[
\sum_{n=0}^\infty \langle f, Bf_n \rangle f_n = f,
\]
where the series converges in norm. 

Similarly, a sequence \(\{f_n\}_{n\in \mathbb{Z}} \subset \Hcal\) \textbf{does frame reconstruction with $B\in B(\Hcal)$} if for all \(f \in \Hcal\),
\[
\lim_{M\to \infty}\sum_{n=-M}^{M} \langle f, Bf_n \rangle f_n = f,
\]
where the series converges in norm.

Note that this definition does not assume anything about unconditional convergence, but if either series above converge unconditionally for all $f\in \Hcal$, then we say $\{f_{n}\}_{n\in I}$ \textbf{does unconditional frame reconstruction with $B$}.
\end{def_n}
First, we will show that a sequence doing frame reconstruction does not require the sequence to be a frame with following example:
\begin{ex}\label{firstnonframeex}
Consider the standard orthonormal basis \(\{e_n\}_{n=0}^\infty\) of \(\ell^2(\N)\). Define
\[
f_n := (n+1) e_n.
\]
Then \(\{f_n\}_{n=0}^\infty\) does unconditional frame reconstruction with $B$ where
$$B\sum_{n=0}^{\infty}a_{n}e_{n}=\sum_{n=0}^\infty \frac{a_n}{(n+1)^2} e_n.$$
Indeed, for any \(f \in \ell^2(\N)\),
\begin{equation}\label{dfrandnotfex1}
\sum_{n=0}^\infty \langle f, Bf_n \rangle f_n = \sum_{n=0}^\infty \left\langle f, \frac{e_n}{n+1} \right \rangle (n+1 )e_n=f.\end{equation}
Furthermore, a similar calculation shows that for any \(f \in \ell^2(\N)\),
\begin{equation}\label{dfrandnotfex2}
\sum_{n=0}^\infty \langle f, f_n \rangle Bf_n =f.\end{equation}
Additionally, the convergence in equation \eqref{dfrandnotfex1} and equation \eqref{dfrandnotfex2} is unconditional; therefore, in almost every way, $\{f_{n}\}$ does what one would want from a frame. However, it is easy to check that $\{f_{n}\}$ is not a frame as it fails to be a Bessel sequence.
\end{ex}
In the previous example, $\{f_{n}\}$ did unconditional frame reconstruction, but small changes to vectors $f\in \mathcal{H}$ may result in large changes in inner product data: $\{\langle f, f_n\rangle\}$ since $\{f_n\}$ is an unbounded sequence. However, now we provide an example of a bounded sequence that does unconditional frame reconstruction that is not a frame:
\begin{ex}\label{framereconex2}
Consider the standard orthonormal basis \(\{e_n\}_{n=0}^\infty\) of \(\ell^2(\N)\). Define $\{f_n\}_{n=0}^{\infty}$ to be the following sequence: 
$$\{e_{0}, e_1, e_1, e_2, e_2, e_2,\dots\}.$$
It is easy to see that $\{f_n\}$ is not a Bessel sequence since 
$$\sum_{n=0}^{\infty}|\langle e_{k},f_{n}\rangle|^{2}=k+1.$$
However, if we define $B\in B(\ell^{2}(\mathbb{N}))$ where
$$B\sum_{n=0}^{\infty}a_{n}e_{n}=\sum_{n=0}^\infty \frac{a_n}{n+1} e_n,$$ then $\{f_n\}$ does unconditional frame reconstruction with $B$, which follows from
$$\{e_{0}, \frac{e_1}{\sqrt{2}}, \frac{e_1}{\sqrt{2}}, \frac{e_2}{\sqrt{3}}, \frac{e_2}{\sqrt{3}},\frac{e_2}{\sqrt{3}},\dots\}$$ being a Parseval frame.
\end{ex}
We will see soon however, that a bounded minimally complete sequence that does unconditional frame reconstruction must be a frame. Now we show that if $\{f_{n}\}$ does frame reconstruction with $B$, then $B$ is unique. Therefore, we are justified in saying $\{f_n\}$ \textbf{does frame reconstruction} instead of $\{f_n\}$ \textbf{does frame reconstruction with $B$}.
\begin{prop}
Let $\Hcal$ be a Hilbert space and suppose $\{f_{n}\}_{n\in I}\subseteq \Hcal$ where $I$ is $\mathbb{N}$ or $\mathbb{Z}$ does frame reconstruction with $B$ and $V$. Then $$B=V.$$
\end{prop}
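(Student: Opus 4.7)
The plan is to let $W := B-V$ and show that $W=0$, by exploiting the fact that norm-convergent sums can be paired (continuously) with a suitable functional to produce a sum of nonnegative terms.

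Subtracting the two reconstructions term-by-term (which is legitimate because both series converge in norm for every fixed $f$) yields
\[
0 \;=\; \sum_{n\in I}\langle f, Wf_n\rangle\, f_n \;=\; \sum_{n\in I}\langle W^{*}f, f_n\rangle\, f_n
\qquad(\forall f\in\Hcal),
\]
with convergence in norm (and in the bi-infinite case this means convergence of the symmetric partial sums to $0$).

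Next, I would pair this identity with the bounded linear functional $\langle\,\cdot\,,W^{*}f\rangle$. Because the series converges in norm, we may pass the inner product inside the limit of partial sums, obtaining
\[
0 \;=\; \sum_{n\in I}\langle W^{*}f, f_n\rangle\,\langle f_n, W^{*}f\rangle
  \;=\; \sum_{n\in I}|\langle W^{*}f, f_n\rangle|^{2}.
\]
Since the terms are nonnegative and the partial sums are monotone nondecreasing (this is where working with symmetric partial sums in the bi-infinite case is needed), every term must vanish: $\langle W^{*}f, f_n\rangle = 0$ for every $n\in I$ and every $f\in\Hcal$. Rewriting this as $\langle f, Wf_n\rangle = 0$ for all $f$ gives $Wf_n=0$ for every $n$.

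Finally, I would invoke completeness of $\{f_n\}$: the frame-reconstruction hypothesis for $B$ (say) presents every $f\in\Hcal$ as a norm limit of finite linear combinations of the $f_n$, so $\overline{\mathrm{span}}\{f_n\}_{n\in I}=\Hcal$. Since $W$ is bounded and vanishes on each $f_n$, it vanishes on the linear span, and by continuity on its closure, which is all of $\Hcal$. Hence $W=0$, i.e.\ $B=V$. The main conceptual step is the passage to the nonnegative quadratic form in the second paragraph; this is the device that sidesteps the absence of any Bessel bound on $\{f_n\}$, which otherwise would be the obvious obstacle since one cannot manipulate $\sum_n \langle f,Wf_n\rangle f_n$ as a bounded operator sum a priori.
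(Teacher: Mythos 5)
Your proposal is correct and follows essentially the same route as the paper: subtract the two reconstructions, pair the resulting zero series with $(B-V)^{*}f$ to obtain $\sum_{n\in I}|\langle (B-V)^{*}f, f_n\rangle|^{2}=0$, and finish with completeness of $\{f_n\}$. The only (cosmetic) difference is the final step: the paper concludes $(B-V)^{*}f=0$ for all $f$ directly from completeness, while you first deduce $(B-V)f_n=0$ for every $n$ and then use density of $\operatorname{span}\{f_n\}$; both are equally valid.
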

\begin{proof}
We will assume $I=\mathbb{N}$ since the proof when $I=\mathbb{Z}$ is similar.
We have for all $f\in \Hcal$,

\[
\sum_{n=0}^\infty \left\langle f, (B-V)f_n \right\rangle f_n =0.
\]
Therefore for all $f\in \Hcal$,

\[
\sum_{n=0}^\infty \left\langle (B - V)^{*} f, f_n  \right \rangle \left \langle f_n, (B - V)^{*} f  \right\rangle = 0.
\]
Now since $\{f_{n}\}_{n\in \mathbb{N}}$ is clearly complete, for all $f\in \Hcal$,
$$(B-V)^{*}f=0.$$
\end{proof}

To justify the name \textbf{does frame construction}, we show that if $\{f_{n}\}$ does frame reconstruction, it enjoys many of the same properties that frames do. In particular, sequences that do frame reconstruction enjoy a type of frame decomposition:

\begin{thm}\label{doesframerecprop}
Let $\Hcal$ be a Hilbert space and suppose $\{f_{n}\}_{n\in I}\subseteq \Hcal$ where $I$ is $\mathbb{N}$ or $\mathbb{Z}$ does frame reconstruction with $B$. Then 
\begin{enumerate}
    \item \(B\) is positive semi-definite, injective, and has dense range.
    \item For all $f\in \Hcal$, $$\sum_{n\in I}\langle f, f_n\rangle Bf_n=\sum_{n\in I}\langle Bf, f_n\rangle f_n=f$$ where the sum is done appropriately depending on if $I$ is $\mathbb{N}$ or $\mathbb{Z}$.
    \item For all $f\in \Hcal$, $$\sum_{n\in I}|\langle f, Bf_{n}\rangle|^{2} \leq  \|B\| \|f\|^{2}.$$
    \item For all $f\in \Hcal$,
    \[
    \frac{1}{\|B\|}\|f\|^2 \leq \sum_{n\in I} |\langle f, f_n \rangle|^2.
    \]
    \item $\{\sqrt{B}f_n\}_{n\in I}$ is a Parseval frame.
\end{enumerate}
\end{thm}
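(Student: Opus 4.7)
My plan is to prove the parts in the order (1), (3), (4), (5), (2), since the central identity behind (1) immediately gives (3), and the Parseval property (5) together with a uniform-boundedness argument yields (2).

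The key computation is: because $B$ is bounded and the reconstruction converges in norm, one may apply $B$ termwise to get $Bf=\sum_{n\in I}\langle f,Bf_{n}\rangle Bf_{n}$, and then take the inner product with $f$, using $\langle Bf_{n},f\rangle=\overline{\langle f,Bf_{n}\rangle}$, to obtain
\[
\langle Bf,f\rangle=\sum_{n\in I}|\langle f,Bf_{n}\rangle|^{2}\ge 0.
\]
Hence $B$ is positive semi-definite, and on a complex Hilbert space this forces $B=B^{*}$. If $Bf=0$, then $\langle f,Bf_{n}\rangle=\langle Bf,f_{n}\rangle=0$ for all $n$, so the reconstruction of $f$ collapses to zero and $f=0$; thus $B$ is injective, with dense range. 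Part (3) is now immediate: $\sum|\langle f,Bf_{n}\rangle|^{2}=\langle Bf,f\rangle\le\|B\|\|f\|^{2}$. For (4), pair the reconstruction with $f$, apply Cauchy--Schwarz to the partial sums, and use (3); after rearrangement this yields $\|f\|^{2}\le\|B\|\sum_{n\in I}|\langle f,f_{n}\rangle|^{2}$.

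For (5), set $V=\sqrt{B}$ and $T_{N}:=\sum_{|n|\le N}\langle\cdot,Vf_{n}\rangle Vf_{n}$ (with the obvious one-sided variant when $I=\mathbb{N}$). Each $T_{N}$ is a positive operator, and $\{T_{N}\}$ is monotonically increasing. Applying $V$ to the reconstruction shows $T_{N}h\to h$ for every $h\in\operatorname{Range}(V)$, so $\langle T_{N}h,h\rangle\le\|h\|^{2}$ on this dense subspace; since $\langle T_{N}\cdot,\cdot\rangle$ is a continuous quadratic form, the bound extends to all of $\Hcal$, giving $T_{N}\le I$. The standard monotone-convergence result for increasing, bounded sequences of self-adjoint operators then produces a strong limit $T\le I$; since $T=I$ on the dense set $\operatorname{Range}(V)$ and $T$ is bounded, $T=I$ everywhere, so $\{Vf_{n}\}$ is Parseval. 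For part (2), the identity $\sum\langle Bf,f_{n}\rangle f_{n}=f$ is the original reconstruction via $B=B^{*}$; for $\sum\langle f,f_{n}\rangle Bf_{n}=f$ one checks that the relevant partial sum is $S_{N}^{*}$, where $S_{N}f:=\sum_{n=0}^{N}\langle f,Bf_{n}\rangle f_{n}$, so $\{S_{N}^{*}\}$ is uniformly bounded by the uniform boundedness principle, and convergence on $\operatorname{Range}(V)$ is verified by applying $V$ to the Parseval expansion from (5), then extended to all of $\Hcal$ by density.

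The main obstacle is recognizing the identity $\langle Bf,f\rangle=\sum|\langle f,Bf_{n}\rangle|^{2}$; once that is in hand, parts (1), (3), and (4) are essentially automatic, and the remaining parts reduce to a monotone-convergence argument for (5) and a uniform-boundedness plus density argument for (2).
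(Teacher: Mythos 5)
Your proposal is correct and follows essentially the same route as the paper: the identity $\langle Bf,f\rangle=\sum_{n}|\langle f,Bf_{n}\rangle|^{2}$ drives (1), (3), (4); the $\sqrt{B}$-trick plus density of $\operatorname{Range}(\sqrt{B})$ gives (5); and (2) comes from uniform boundedness of the adjoint partial sums together with convergence on a dense subspace. The only differences are cosmetic — you extend the Parseval identity via $T_{N}\le I$ and the monotone strong limit rather than the paper's contradiction argument, and in (2) you check convergence on $\operatorname{Range}(\sqrt{B})$ using the Parseval expansion where the paper uses $\operatorname{Im}B$ and the original reconstruction — and both variants are sound.
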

\begin{proof}
Again we will assume $I=\mathbb{N}$ since the proof when $I=\mathbb{Z}$ is similar.

$(1)$:

Consider for any $f\in \Hcal$ by the frame reconstruction property, $$\langle f,B^{*}f\rangle= \sum_{n=0}^{\infty}\langle f, Bf_n\rangle \langle f_n, B^{*}f\rangle \geq 0.$$
Furthermore, it is clear that $\{Bf_n\}$ is dense in $\mathcal{H}$ by the frame reconstruction property, so $(1)$ follows.

$(2)$:

By the principle of uniform boundedness, $R_{N}: \Hcal \to \Hcal$ where
$$R_{N}f=\sum_{n=0}^{N}\langle f, Bf_n\rangle f_n$$ are a uniformly bounded sequence of operators. Therefore,
$R_{N}^{*}$ are a uniformly bounded sequence of operators where
$$R_{N}^{*}f=\sum_{n=0}^{N}\langle f ,f_n\rangle Bf_n.$$ Furthermore, since $B$ is bounded and self-adjoint,
$$Bf=B\sum_{n=0}^{\infty}\langle f, Bf_n\rangle f_n=\sum_{n=0}^{\infty}\langle f, Bf_n\rangle Bf_n=\sum_{n=0}^{\infty}\langle Bf, f_n\rangle Bf_n$$
for all $f\in \Hcal.$ That is, $R_{N}^{*}$ converge to $I$ pointwise on $ImB$, which is dense in $\Hcal$. Therefore, $R_{N}^{*}$ must converge to $I$ pointwise on $\Hcal$.

$(3):$

This follows easily from the proof of $(1)$ and Cauchy–Schwarz inequality.

$(4):$

Note for any $f\in \Hcal$ by $(3)$,
$$\langle f, f\rangle=\sum_{n\in I}\langle f, Bf_n\rangle \langle f_n, f\rangle  \leq \sqrt{\|B\|}\|f\| \sqrt{\sum_{n\in I}|\langle f, f_{n}\rangle|^{2}}.$$

$(5):$

By applying the reconstruction formula and applying $\sqrt{B}$ to both sides, we have
$$\sqrt{B}f=\sum_{n=0}^{\infty}\left\langle \sqrt{B}f, \sqrt{B}f_n \right\rangle \sqrt{B}f_n$$
for all $f\in \mathcal{H}$. Therefore, 
$$\left \|\sqrt{B}f \right\|^{2}=\sum_{n=0}^{\infty}\left|\left\langle \sqrt{B}f, \sqrt{B}f_n \right\rangle \right|^{2}$$
for all $f\in \mathcal{H}$. Now by $(1)$, $\sqrt{B}$ has dense range. Suppose for the sake of contradiction and without lose of generality that for some $f\in \mathcal{H}$,
$$\sum_{n=0}^{\infty}\left|\left\langle f, \sqrt{B}f_n \right\rangle \right|^{2}>\|f\|^{2}.$$
Then for some $M$,
$$\sum_{n=0}^{M}\left|\left\langle f, \sqrt{B}f_n \right\rangle \right|^{2}>\|f\|^{2}.$$
Therefore, there is a $g\in \mathcal{H}$ such that
$$\sum_{n=0}^{M}\left|\left\langle \sqrt{B}g, \sqrt{B}f_n \right\rangle \right|^{2}> \left \|\sqrt{B}g \right\|^{2},$$ which is a contradiction. 
\end{proof}

\begin{cor}\label{almostframecor}
Let $\Hcal$ be a Hilbert space and suppose $\{f_{n}\}_{n\in I}\subseteq \Hcal$ where $I$ is $\mathbb{N}$ or $\mathbb{Z}$ does frame reconstruction with $B$. If \emph{any} of the following conditions hold:
\begin{enumerate}
\item $\{f_n\}_{n\in I}$ is a Bessel sequence,
\item \(B\) has closed range,
\item there exists $A>0$ such that for all $f\in \Hcal$,
    \[
    A\|f\|^2 \leq \sum_{n\in I} |\langle f, Bf_n \rangle|^2,
    \]
\item \(\inf_n \|B f_n\| > 0\) and $\{f_n\}_{n\in I}$ does unconditional frame reconstruction,
\end{enumerate}
then $\{f_n\}_{n\in I}$ is a frame.
\end{cor}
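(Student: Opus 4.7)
My plan is that item (4) of Theorem \ref{doesframerecprop} already supplies the lower frame bound
$$\frac{1}{\|B\|}\|f\|^{2}\leq \sum_{n\in I}|\langle f, f_{n}\rangle|^{2},$$
so in every case it will suffice to establish the Bessel upper bound, after which $\{f_{n}\}_{n\in I}$ is a frame. Case (1) is tautological. For case (2), I would combine the closed range hypothesis with the injectivity and dense range of $B$ from Theorem \ref{doesframerecprop}(1) to force $B$ to be bijective, hence invertible by the bounded inverse theorem; in particular $\sqrt{B}$ has a bounded self-adjoint inverse. Substituting $\sqrt{B}^{-1}g$ for $f$ in the Parseval identity $\sum|\langle \sqrt{B}f, f_{n}\rangle|^{2}=\|f\|^{2}$ arising from Theorem \ref{doesframerecprop}(5) yields $\sum|\langle g, f_{n}\rangle|^{2}\leq \|B^{-1}\|\|g\|^{2}$, which is the desired Bessel estimate.

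For case (3), the hypothesis together with Theorem \ref{doesframerecprop}(3) makes $\{Bf_{n}\}_{n\in I}$ itself a frame. Substituting $Bf$ for $f$ in the reconstruction identity $\sum\langle f, f_{n}\rangle Bf_{n}=f$ of Theorem \ref{doesframerecprop}(2), and invoking self-adjointness of $B$, gives $\sum\langle f, Bf_{n}\rangle Bf_{n}=Bf$, identifying $B$ with the frame operator of $\{Bf_{n}\}_{n\in I}$. Since frame operators are invertible, $B$ is invertible and case (3) reduces to case (2).

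Case (4) is the main obstacle. The plan is a Rademacher-signs averaging argument. Unconditional convergence of $\sum\langle f, Bf_{n}\rangle f_{n}$ implies that for each fixed $f$ the signed partial sums $T_{\epsilon,N}f:=\sum_{n=0}^{N}\epsilon_{n}\langle f, Bf_{n}\rangle f_{n}$ stay bounded as $\epsilon\in\{\pm 1\}^{\mathbb{N}}$ and $N$ vary, so by the uniform boundedness principle there is a constant $K$ with $\|T_{\epsilon,N}\|\leq K$ for every $\epsilon$ and $N$. Taking adjoints gives the same bound for $T_{\epsilon,N}^{*}g=\sum_{n=0}^{N}\epsilon_{n}\langle g, f_{n}\rangle Bf_{n}$; expanding $\|T_{\epsilon,N}^{*}g\|^{2}$ and averaging over independent Rademacher $\epsilon_{n}$ annihilates the off-diagonal cross-terms and leaves
$$\sum_{n=0}^{N}|\langle g, f_{n}\rangle|^{2}\|Bf_{n}\|^{2}\leq K^{2}\|g\|^{2}.$$
Sending $N\to\infty$ and dividing by the uniform positive lower bound on $\|Bf_{n}\|^{2}$ converts this weighted estimate into the Bessel inequality. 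The delicate step is justifying that unconditional convergence of a single Hilbert-space series really does yield uniform boundedness of $T_{\epsilon,N}$ across \emph{all} sign choices, not merely along ordinary partial sums; this is what unlocks the Rademacher cancellation and is exactly where the hypothesis $\inf_{n}\|Bf_{n}\|>0$ is needed to remove the weight at the end.
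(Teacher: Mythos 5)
Your proposal is correct, but it takes a genuinely different route from the paper in cases (2)--(4), and it is worth comparing the two. For (2), the paper notes that closed range plus injectivity and dense range (Theorem \ref{doesframerecprop}(1)) make $B$ invertible, then observes that $\sum_{n}\langle f,f_n\rangle f_n$ converges for every $f$ and invokes the uniform boundedness principle to get the Bessel bound; your substitution of $\sqrt{B}^{-1}g$ into the Parseval identity for $\{\sqrt{B}f_n\}$ from Theorem \ref{doesframerecprop}(5) is a clean alternative that even produces explicit frame bounds $\frac{1}{\|B\|}\|g\|^2\le\sum_n|\langle g,f_n\rangle|^2\le\|B^{-1}\|\,\|g\|^2$. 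For (3), the paper argues more directly: Cauchy--Schwarz gives $\sum_n|\langle f,Bf_n\rangle|^2=\langle f,B^*f\rangle\le\|Bf\|\,\|f\|$, so the hypothesis yields $A\|f\|\le\|Bf\|$ and hence closed range; your identification of $B$ with the frame operator of the frame $\{Bf_n\}$ (frame by the hypothesis plus Theorem \ref{doesframerecprop}(3), and $Bf=\sum_n\langle f,Bf_n\rangle Bf_n$ by self-adjointness) is equally valid and reduces to (2) the same way. For (4), the paper applies Gohberg's theorem (unconditional convergence implies square-summability of norms) to the series $\sum_n\langle f,f_n\rangle Bf_n$ to get $\sum_n|\langle f,f_n\rangle|^2\|Bf_n\|^2<\infty$ pointwise, divides by $\inf_n\|Bf_n\|^2$, and then uses uniform boundedness to get a Bessel bound; your Rademacher-averaging argument replaces Gohberg's theorem and delivers a quantitative Bessel constant $(K/\inf_n\|Bf_n\|)^2$ in one stroke. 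The step you flag as delicate is in fact standard: unconditional convergence of $\sum_n\langle f,Bf_n\rangle f_n$ for a fixed $f$ implies the set of all finite signed sub-sums is bounded (choose a finite $F_0$ from the Cauchy criterion over finite sets; any signed sub-sum deviates from a sum over a subset of $F_0$ by at most $2$), so $\sup_{\epsilon,N}\|T_{\epsilon,N}f\|<\infty$ for each $f$ and the uniform boundedness principle gives the uniform $K$; note only that the hypothesis $\inf_n\|Bf_n\|>0$ plays no role in that step --- it is used solely at the end to remove the weights $\|Bf_n\|^2$, exactly as in the paper.
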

\begin{proof}
$(1)$: 

This is clear from statement $(4)$ of Theorem \ref{doesframerecprop}.

$(2)$:

By statement $(1)$ of Theorem \ref{doesframerecprop}, $B$ is invertible and therefore, for all $f\in \Hcal$,
$\sum_{n=0}^{\infty}\langle f,f_{n}\rangle f_{n}$ or $\sum_{n=-\infty}^{\infty}\langle f,f_{n}\rangle f_{n}$ exists. Taking inner products of these terms with $f$ gets that $\{f_n\}_{n\in I}$ is a Bessel sequence by the principle of uniform boundedness. Therefore, we conclude $\{f_n\}_{n\in I}$ is a frame by $(1)$.

$(3):$

Consider the following for any $f\in \Hcal$,
$$\langle f, B^{*}f\rangle =\sum_{n\in I}|\langle f, Bf_{n}\rangle|^{2}\leq \|Bf\| \|f\|.$$  Now using $(3)$ and rearranging, we get
$$A\|f\|\leq \|Bf\|.$$ In particular, the range of $B$ is closed. Therefore, we conclude $\{f_n\}_{n\in I}$ is a frame by $(2)$.

$(4):$

We have for all $f\in \Hcal$, $$\sum_{n\in I}\|\langle f, f_n \rangle B f_n\|^{2}<\infty$$ from unconditional convergence. Also, \(\inf_n \|B f_n\| > 0\) get us that $\{f_{n}\}$ is a Bessel sequence by the principle of uniform boundedness; therefore, we conclude $\{f_n\}_{n\in I}$ is a frame by $(1)$.
\end{proof}

Now using a frame operator, we give a classification of sequences that do frame reconstruction. This result is inspired by the work of Christensen in \cite{Christensen1995Frames} where the author utilizes the bounded right inverse of a synthesis operator.
\begin{thm}
Let $\Hcal$ be a Hilbert space and suppose $\{f_{n}\}_{n\in I}\subseteq \Hcal$ where $I$ is $\mathbb{N}$ or $\mathbb{Z}$. Then $\{f_{n}\}_{n\in I}\subseteq \Hcal$ does frame reconstruction if and only if there is a dense subspace $D\subseteq \Hcal$ such that the map $S:D\to \Hcal$ where
$$Sf= \sum_{n\in I}\langle f, f_{n}\rangle f_{n}$$ is defined, closed, and surjective, and the limit is taken in the appropriate way, depending on $I$.\end{thm}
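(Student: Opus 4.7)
The plan is to prove both directions by connecting the operator $S$ to the frame-reconstruction operator $B$ via an inverse / bounded-right-inverse relationship, with Beutler's theorem providing the bridge in the backward direction.

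For the forward direction, I would assume $\{f_n\}_{n\in I}$ does frame reconstruction with $B$ and take $D:=\mathrm{Im}(B)$, which is dense in $\Hcal$ by Theorem \ref{doesframerecprop}(1). For $g=Bh\in D$, self-adjointness of $B$ (positive semi-definiteness from Theorem \ref{doesframerecprop}(1) forces self-adjointness on a complex Hilbert space) gives
$$\sum_{n\in I}\langle g,f_n\rangle f_n=\sum_{n\in I}\langle Bh,f_n\rangle f_n=\sum_{n\in I}\langle h,Bf_n\rangle f_n=h,$$
so $S$ is defined on $D$ and in fact coincides with $B^{-1}$ there. Surjectivity is then immediate from $S(Bh)=h$ for every $h\in\Hcal$, and closedness follows because the graph of $S$ is the image under coordinate swap of the graph of the bounded operator $B$, hence closed in $\Hcal\times\Hcal$.

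For the backward direction, assume $S:D\to\Hcal$ is defined, closed and surjective on the dense subspace $D$. I would invoke Beutler's theorem applied to $S$ to obtain $B_0\in B(\Hcal)$ with $SB_0=I$. Then $B_0 f\in D$ for every $f\in\Hcal$, and
$$f=S(B_0 f)=\sum_{n\in I}\langle B_0 f,f_n\rangle f_n=\sum_{n\in I}\langle f,B_0^{*}f_n\rangle f_n,$$
which is exactly the definition of frame reconstruction with $B:=B_0^{*}$. The uniqueness proposition proved earlier then forces this $B_0^{*}$ to coincide with the positive operator described by Theorem \ref{doesframerecprop}, so there is no inconsistency.

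The main subtlety is in the forward direction: $S$ is defined via the unweighted series $\sum\langle f,f_n\rangle f_n$, which is not known a priori to converge for arbitrary $f\in\Hcal$, so the content is really identifying an appropriate dense domain of convergence. Choosing $D=\mathrm{Im}(B)$ and using self-adjointness of $B$ to shift the factor of $B$ off the summand coefficient and onto $h$ is precisely what makes the unweighted series converge on $D$ and simultaneously forces $S$ to equal $B^{-1}$ there; this also gives closedness of $S$ for free. In the backward direction, the one minor algebraic point to notice is that Beutler's right inverse $B_0$ need not itself be self-adjoint, so one must pass to the adjoint $B_0^{*}$ to recover the precise form $\langle f,Bf_n\rangle$ appearing in the definition of frame reconstruction.
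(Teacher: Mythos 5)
Your proposal is correct and follows essentially the same route as the paper: in the forward direction you take $D=\mathrm{Im}(B)$, use self-adjointness of $B$ to see $S(Bh)=h$ (giving surjectivity and closedness, your graph-swap argument being just a repackaging of the paper's sequential one), and in the backward direction you apply Beutler's theorem to get a bounded right inverse $B_0$ and pass to $B_0^{*}$, exactly as the paper does.
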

\begin{proof}
Suppose there is a dense subspace $D\subseteq \Hcal$ such that the map $S:D\to \Hcal$ is defined, closed, and surjective. Then there is a $B\in B(\Hcal)$ such that for all $f\in \Hcal$,
$$SBf=f,$$ so $\{f_{n}\}_{n\in I}$ does frame reconstruction with $B^{*}$.

Now suppose that $\{f_{n}\}_{n\in I}\subseteq \Hcal$ does frame reconstruction with some operator $B$, which we now know must be self-adjoint by Theorem \ref{doesframerecprop}. Then S is defined on $ImB$, which we now know must be dense in $\Hcal$ by Theorem \ref{doesframerecprop}. Also, note that $S$ is surjective on $ImB$. To show that $S$ on $ImB$ is a closed operator, suppose that $Bf_n\to g$ for some sequence $\{f_{n}\}\subseteq \Hcal$ and $g\in \Hcal$ and that $SBf_n=f_n\to f=SBf$ for some $f\in \Hcal$. Then by continuity of $B$, $g=Bf.$
\end{proof}

\begin{rem}
A similar argument shows that $\{f_{n}\}_{n\in I}\subseteq \Hcal$ does unconditional frame reconstruction if and only if there is a dense subspace $D\subseteq \Hcal$ such that the map $S:D\to \Hcal$ where
$$Sf= \sum_{n\in I}\langle f, f_{n}\rangle f_{n}$$ is defined, closed, and surjective, where the sum converges unconditionally.
\end{rem}

\section{Two sequences that don't do frame reconstruction}\label{s4}
In this section, we provide two examples of sequences that possess frame-like properties but fail to do frame reconstruction. Our first example arises from the Kaczmarz algorithm. 

The Kaczmarz algorithm in Hilbert spaces is an alternate method to address the problem of reconstructing any vector \( f \in \mathcal{H} \) from some inner product data $\{\langle f, f_n \rangle \}$ in a stable manner, see \cite{Haller2005Kaczmarz},\cite{Kwapien2001Kaczmarz}. However, as we now show, a sequence \( \{f_n\} \) being \emph{effective} for the Kaczmarz algorithm—that is, enabling stable recovery through this iterative process—does \emph{not} necessarily imply that \( \{f_n\} \) does frame reconstruction. Additionally, it is known that if \( \{f_n\} \) is a frame, this also does not necessarily imply that \( \{f_n\} \) is effective for the Kaczmarz algorithm, see \cite{Czaja2013Kaczmarz} for details. Therefore, the frame reconstruction approach and the Kaczmarz algorithm are incomparable methods for addressing the stable signal recovery problem.

The following was shown by Herr and Weber \cite{Herr2017Fourier} via Kwapien and Mycielski's result \cite{Kwapien2001Kaczmarz} using the Kaczmarz algorithm:

\begin{thm*}[Herr and Weber]
Let $\mu$ be a singular Borel probability measure on $[0,1)$. Then there exists a Parseval frame $\{g_{n}\}_{n=0}^{\infty}\subseteq L^{2}(\mu)$ such that
$$f=\sum_{n=0}^{\infty}\langle f, g_{n}\rangle e^{2\pi i nx}$$ for all $f\in L^{2}(\mu)$.
\end{thm*}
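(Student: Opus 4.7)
The plan is to combine the Kwapien--Mycielski analysis of the Kaczmarz algorithm \cite{Kwapien2001Kaczmarz} with a Szeg\H{o}/Kolmogorov--Krein-type input from prediction theory. The bridge between the Kaczmarz iteration and frames is the following: given any sequence of unit vectors $\{\varphi_n\}_{n=0}^{\infty}$ in a separable Hilbert space $\Hcal$, one produces canonically an auxiliary sequence $\{g_n\}\subseteq \Hcal$ (obtained by orthogonalizing $\varphi_n$ against its predecessors and rescaling appropriately) so that the partial sums $\sum_{k=0}^{N}\langle f, g_k\rangle \varphi_k$ coincide with the Kaczmarz iterates on input $f$. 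Kwapien--Mycielski prove that $\{g_n\}$ is automatically a Parseval frame realizing $f=\sum_n \langle f, g_n\rangle \varphi_n$ in norm whenever $\{\varphi_n\}$ is \emph{effective}, that is, whenever the Kaczmarz iterates converge in norm for every input.

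First I would specialize this framework to $\varphi_n(x) := e^{2\pi i n x}$, viewed inside $\Hcal = L^{2}(\mu)$. Because $\mu$ is a probability measure, $\|\varphi_n\|_{L^{2}(\mu)} = 1$, so the Kwapien--Mycielski recipe delivers a candidate auxiliary sequence $\{g_n\}\subseteq L^{2}(\mu)$. The entire theorem then reduces to showing that $\{e^{2\pi i n x}\}_{n=0}^{\infty}$ is effective for the Kaczmarz algorithm in $L^{2}(\mu)$.

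Second, I would establish effectiveness by reformulating it as an approximation statement. Chasing through the Kaczmarz iteration one step at a time, effectiveness in this setting is equivalent to the constant function $1$ lying in the closed linear span of $\{e^{2\pi i n x}\}_{n \geq 1}$ in $L^{2}(\mu)$, equivalently to
$$\inf_{p}\int_{0}^{1}|1-p(x)|^{2}\, d\mu(x) \;=\; 0,$$
with the infimum taken over all trigonometric polynomials $p$ in $e^{2\pi i x}, e^{2\pi i 2 x}, \dots$ with zero constant term. By the Kolmogorov--Krein/Szeg\H{o} alternative, this infimum vanishes whenever the absolutely continuous part of $\mu$ with respect to Lebesgue measure is trivial; in particular it vanishes whenever $\mu$ is singular, which is exactly our hypothesis.

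The main obstacle is this last step: the Kolmogorov--Krein theorem is a classical but genuinely non-trivial input from prediction theory, and is the precise point where the singularity hypothesis on $\mu$ must enter. Once effectiveness of the exponential system is in hand, the Kwapien--Mycielski construction hands us the Parseval frame $\{g_n\}\subseteq L^{2}(\mu)$ together with the reconstruction $f = \sum_{n=0}^{\infty}\langle f, g_n\rangle e^{2\pi i n x}$ for every $f\in L^{2}(\mu)$, which is the statement of the theorem.
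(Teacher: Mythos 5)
The paper itself does not prove this statement---it is quoted as an external result of Herr and Weber, obtained from Kwapie\'n--Mycielski's theorem on the Kaczmarz algorithm---so your proposal has to be judged against that literature, and at the level of architecture it matches it: the auxiliary sequence $\{g_n\}$, the identity $\|f\|^2=\sum_{k\le N}|\langle f,g_k\rangle|^2+\|f-f_N\|^2$, and the fact that effectiveness of $\{e^{2\pi i nx}\}_{n\ge 0}$ in $L^2(\mu)$ is exactly what must be proved, are all correct and are the route Herr--Weber take.

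The genuine gap is in your second step. You assert that ``chasing through the Kaczmarz iteration one step at a time'' shows effectiveness is \emph{equivalent} to $1\in\overline{\mathrm{span}}\{e^{2\pi i nx}\}_{n\ge 1}$ in $L^2(\mu)$, and you then treat Kolmogorov--Krein as the only nontrivial input. Only the direction you do not need comes from such a chase: writing $P_k=U^kP_0U^{-k}$ with $U$ multiplication by $e^{2\pi i x}$, one sees that if some $h\ne 0$ is orthogonal to all $e^{2\pi i nx}$, $n\ge 1$, then the error operators act on $h$ as $U^{-n}h$ and effectiveness fails; i.e.\ effectiveness $\Rightarrow$ the spanning condition. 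The implication you actually need---that the spanning condition (in particular, singularity of $\mu$, via Kolmogorov--Krein) forces the products $(I-P_N)\cdots(I-P_0)f$ to tend to $0$ in norm, equivalently forces $\{g_n\}$ to be Parseval---is precisely the substantive content of Kwapie\'n--Mycielski's theorem for stationary sequences with singular spectral measure (completed by Haller--Szwarc for the non-Szeg\H{o} absolutely continuous case). Their proof is not a reduction to the spanning criterion: it computes $g_n$ explicitly through the Taylor coefficients of the reciprocal of $\phi(z)=\sum_{n\ge0}\widehat{\mu}(n)z^n$ and uses boundary-value/inner-function arguments to verify the Parseval identity. Note also that effectiveness of a sequence of unit vectors is strictly stronger than completeness in general, so a spanning statement alone cannot settle it. To make your write-up correct, replace the claimed ``equivalence by chasing'' either with a citation of the Kwapie\'n--Mycielski effectiveness theorem for singular measures (which is what Herr--Weber do) or with that function-theoretic argument carried out in full; as written, the hardest part of the theorem is assumed rather than proved.
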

We show that if $\{e^{2\pi i n x}\}_{n=0}^\infty \subseteq L^2(\mu)$ does frame reconstruction for some singular Borel probability measure on $[0,1)$, then $\mu$ is a Rajchman measure. Rajchman measures have a rich history that the reader can see here, \cite{Lyons1995Seventy}.
\begin{lem}
Let $\mu$ be a singular Borel probability measure on $[0,1)$. Suppose that
$
\{e^{2\pi i n x}\}_{n=0}^\infty \subseteq L^2(\mu)
$
does frame reconstruction. Then $$\langle 1, e^{2\pi i n x} \rangle \to 0.$$
\end{lem}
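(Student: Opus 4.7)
The plan is to show that $\langle 1,e^{2\pi i n x}\rangle\to 0$ by combining the Parseval-frame consequence of doing frame reconstruction with a density argument that only needs the exponentials to have bounded $L^2(\mu)$-norm.

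First I would observe that since $\mu$ is a probability measure, $\|e^{2\pi i n x}\|_{L^2(\mu)}=1$ for every $n$, so the linear functionals $h\mapsto \langle h,e^{2\pi i n x}\rangle$ have operator norm exactly $1$ uniformly in $n$. Denote $f_n=e^{2\pi i n x}\in L^2(\mu)$ and let $B\in B(L^2(\mu))$ be the (necessarily self-adjoint positive) operator with which $\{f_n\}$ does frame reconstruction.

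Next I would invoke Theorem \ref{doesframerecprop}. Part (5) says $\{\sqrt{B}f_n\}_{n=0}^\infty$ is a Parseval frame for $L^2(\mu)$, so for every $g\in L^2(\mu)$,
\[
\sum_{n=0}^\infty \bigl|\langle g,\sqrt{B}f_n\rangle\bigr|^2=\|g\|^2<\infty.
\]
In particular the individual terms vanish, giving $\langle \sqrt{B}g,f_n\rangle\to 0$ as $n\to\infty$ for every $g$. Since $g$ is arbitrary, this means $\langle h,f_n\rangle\to 0$ for every $h$ in the subspace $\mathrm{Im}(\sqrt{B})$, which by part (1) of Theorem \ref{doesframerecprop} is dense in $L^2(\mu)$.

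Finally I would upgrade this from a dense subspace to all of $L^2(\mu)$ by the standard three-epsilon argument, using that $|\langle h,f_n\rangle|\le \|h\|$ uniformly in $n$: given $h\in L^2(\mu)$ and $\varepsilon>0$, pick $h'\in \mathrm{Im}(\sqrt{B})$ with $\|h-h'\|<\varepsilon/2$, then choose $N$ so that $|\langle h',f_n\rangle|<\varepsilon/2$ for $n\ge N$, whence $|\langle h,f_n\rangle|<\varepsilon$. Taking $h=1$ yields $\langle 1,e^{2\pi i n x}\rangle\to 0$, as desired.

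I do not anticipate a serious obstacle: the Parseval property from Theorem \ref{doesframerecprop}(5) does the heavy lifting, and the only nontrivial ingredient beyond that is the uniform bound $\|f_n\|=1$, which needs $\mu$ to be a probability measure but not necessarily singular. Singularity is inherited from the setting of the Herr–Weber theorem but is not used in this particular deduction.
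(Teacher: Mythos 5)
Your proof is correct and follows essentially the same route as the paper: produce a dense subspace on which the coefficient sequence $\{\langle h, e^{2\pi i n x}\rangle\}$ lies in $\ell^{2}$ (hence tends to zero), then approximate $1$ from that subspace using the uniform bound $\|e^{2\pi i n x}\|_{L^{2}(\mu)}\le 1$. The only cosmetic difference is that you extract the dense subspace as $\mathrm{Im}(\sqrt{B})$ via the Parseval statement (5) of Theorem \ref{doesframerecprop}, whereas the paper uses the densely defined analysis operator on $\mathrm{Im}(B)$; you are also right that singularity of $\mu$ plays no role in this step.
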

\begin{proof}
By Theorem \ref{doesframerecprop}, the operator
$
f \mapsto \{\langle f, e^{2\pi i n x} \rangle\}_{n=0}^\infty
$
is densely defined as a map from $L^2(\mu)$ into $\ell^2(\mathbb{N})$.
In particular, for every $\epsilon > 0$, there exists $f_{\epsilon} \in L^2(\mu)$ such that
$$
\|1 - f_{\epsilon}\| < \epsilon \quad$$and $ \{\langle f_{\epsilon}, e^{2\pi i n x} \rangle\}_{n=0}^\infty \in \ell^2(\mathbb{N}).
$

It follows that for all $n \in \mathbb{N}$,
\[
|\langle 1, e^{2\pi i n x} \rangle| < \epsilon + |\langle f_{\epsilon}, e^{2\pi i n x} \rangle|.
\]

Therefore,
\[
\limsup_{n \to \infty} |\langle 1, e^{2\pi i n x} \rangle| \leq \epsilon.
\]

Since $\epsilon > 0$ was arbitrary, we conclude that
\[
\langle 1, e^{2\pi i n x} \rangle \to 0.
\]
\end{proof}

In fact, the previous proof showed that $\{e^{2\pi i n x}\}_{n=0}^\infty \subseteq L^2(\mu)$ when $\mu$ is the Cantor-Lebesgue middle-thirds measure fails to have even a simple frame property discussed in Theorem \ref{doesframerecprop}, even though it is effective with the Kaczmarz algorithm.
\begin{cor}
Let $\mu$ be the Cantor-Lebesgue middle-thirds measure on $[0,1)$. Then there is no injective $B\in B(L^{2}(\mu))$ so that $\{Be^{2\pi i nx}\}_{n=0}^{\infty}\subseteq L^2(\mu)$ is a Bessel sequence.
\end{cor}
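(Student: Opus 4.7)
The plan is to argue by contradiction, leveraging the preceding lemma. Concretely, assume an injective $B \in B(L^{2}(\mu))$ exists with $\{Be^{2\pi inx}\}_{n=0}^{\infty}$ Bessel with bound $C$. For every $f \in L^{2}(\mu)$,
$$\sum_{n=0}^{\infty} |\langle B^{*}f, e^{2\pi inx}\rangle|^{2} = \sum_{n=0}^{\infty} |\langle f, Be^{2\pi inx}\rangle|^{2} \le C\|f\|^{2}.$$
Hence the analysis operator $T \colon g \mapsto \{\langle g, e^{2\pi inx}\rangle\}$ is bounded on $\mathrm{Im}(B^{*})$, and $\mathrm{Im}(B^{*})$ is dense in $L^{2}(\mu)$ because $B$ is injective. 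This is the one genuinely new ingredient I need: injectivity plus the Bessel property of $\{Bf_{n}\}$ plays exactly the role that the frame reconstruction hypothesis played in the preceding lemma, namely it forces $T$ to be densely defined.

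With this dense subspace in hand, the previous lemma's proof carries over verbatim. Given $\epsilon > 0$, pick $f_{\epsilon} \in \mathrm{Im}(B^{*})$ with $\|1 - f_{\epsilon}\| < \epsilon$; since $\{\langle f_{\epsilon}, e^{2\pi inx}\rangle\}_{n} \in \ell^{2}(\mathbb{N})$ this tail tends to zero, and the triangle inequality
$$|\langle 1, e^{2\pi inx}\rangle| \le \epsilon + |\langle f_{\epsilon}, e^{2\pi inx}\rangle|$$
gives $\limsup_{n} |\langle 1, e^{2\pi inx}\rangle| \le \epsilon$. Letting $\epsilon \to 0$ yields $\hat{\mu}(n) = \langle 1, e^{2\pi inx}\rangle \to 0$.

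The final contradiction is the classical non-Rajchman property of the Cantor--Lebesgue measure. Realizing $\mu$ as the distribution of $\sum_{k=1}^{\infty} 2X_{k}/3^{k}$ with $X_{k}$ i.i.d.\ Bernoulli$(1/2)$ gives the product formula
$$\hat{\mu}(n) = \prod_{k=1}^{\infty} e^{-2\pi in/3^{k}}\cos(2\pi n/3^{k}),$$
and self-similarity immediately yields $\hat{\mu}(3^{N}) = \hat{\mu}(1) \ne 0$ for every $N \ge 0$, contradicting $\hat{\mu}(n) \to 0$. I expect no serious obstacle: the opening observation (that the Bessel hypothesis plus injectivity of $B$ forces $T$ to be densely defined) is a one-line adjoint calculation, and the remainder is a direct copy of the preceding lemma combined with a standard computation for $\hat{\mu}$.
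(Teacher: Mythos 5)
Your proposal is correct and takes essentially the same route as the paper: the paper's corollary is exactly the observation that the lemma's proof only needs the analysis map to be densely defined with $\ell^2$ data, which injectivity of $B$ (giving density of $\mathrm{Im}(B^{*})$) plus the Bessel bound supplies, so $\hat{\mu}(n)\to 0$, contradicting the non-vanishing Fourier coefficients of the Cantor--Lebesgue measure. The only difference is that you compute $\hat{\mu}(3^{N})=\hat{\mu}(1)\neq 0$ explicitly, where the paper simply cites the literature for this classical fact (and your phrase ``bounded on $\mathrm{Im}(B^{*})$'' should really just be ``defined with $\ell^{2}$ data on $\mathrm{Im}(B^{*})$'', which is all the argument uses).
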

The reader can see \cite{Jorgensen1998Dense} for Fourier coefficients of the Cantor-Lebesgue middle-thirds measure.
In particular, when $\mu$ is the Cantor-Lebesgue middle-thirds measure on $[0,1)$, the frame operator corresponding to $
\{e^{2\pi i n x}\}_{n=0}^\infty \subseteq L^2(\mu)$ is not densely defined.

Now we show that $\{f_{n}\}_{n\in \mathbb{Z}}$ having the property that $\{Bf_n\}_{n\in \mathbb{Z}}$ is a Parseval frame for some $B\in B(\mathcal{H})$ that is positive semi-definite is not a sufficient condition for $\{f_{n}\}_{n\in \mathbb{Z}}$ doing frame reconstruction even though it is necessary.

\begin{thm}\label{abscontfail}
There exists a measure $\mu$ on $[0,1)$ that is absolutely continuous with respect to Lebesgue measure such that there is an $f\in L^{2}(\mu)$ where no $\{c_{n}\}\subseteq \mathbb{C}$ has 
$$f=\lim \sum_{n=-M}^{M}c_{n}e^{2\pi inx}$$ with convergence in $L^{2}(\mu)$. However, there is positive semi-definite $B\in B(L^{2}(\mu))$ where $\{Be^{2\pi i nx}\}_{n\in \mathbb{Z}}\subseteq L^{2}(\mu)$ is an orthonormal basis.
\end{thm}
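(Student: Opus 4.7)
The plan is to take $d\mu = w\,dx$ for a density $w$ that is bounded below by a positive constant but fails the Muckenhoupt $A_2$ condition. A concrete choice is
\[
 w(x) = 1 + \sum_{k=1}^{\infty} 2^k\,\chi_{E_k}(x),
\]
where $E_k \subset [0,1)$ are pairwise disjoint, well-separated intervals of length $4^{-k}$. Then $w \in L^1(dx)$ and $w \ge 1$, and testing on intervals $Q_k \supsetneq E_k$ of comparable size shows $\overline{w}_{Q_k} \cdot \overline{(1/w)}_{Q_k}$ grows like $2^k$, so $[w]_{A_2} = \infty$. The lower bound $w \ge 1$ also gives a contractive embedding $L^2(\mu) \hookrightarrow L^2(dx)$.

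For the positive operator $B$ making $\{Be^{2\pi inx}\}$ an orthonormal basis, I take $B$ to be multiplication by $1/\sqrt{w}$. Boundedness on $L^2(\mu)$ follows from $w \ge 1$ since $\|Bf\|_{L^2(\mu)}^2 = \|f\|_{L^2(dx)}^2 \le \|f\|_{L^2(\mu)}^2$, while positivity and self-adjointness are immediate from $1/\sqrt{w}$ being real and positive. Orthonormality of $\{e^{2\pi inx}/\sqrt{w}\}$ in $L^2(\mu)$ reduces to Fourier orthogonality on $L^2(dx)$ once the weights $1/w$ and $w$ cancel inside the inner product, and completeness follows because any $g \in L^2(\mu)$ orthogonal to every $Be^{2\pi inx}$ yields $g\sqrt{w} \in L^2(dx)$ with all vanishing Fourier coefficients, forcing $g = 0$.

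For the non-representability assertion, I invoke the Hunt--Muckenhoupt--Wheeden theorem: since $w \notin A_2$, the symmetric partial-sum operators $S_M f = \sum_{|n|\le M} \hat f(n)\, e^{2\pi inx}$, which are well defined because $L^2(\mu) \hookrightarrow L^2(dx) \hookrightarrow L^1(dx)$, are not uniformly bounded on $L^2(\mu)$. Banach--Steinhaus then produces $f \in L^2(\mu)$ with $\sup_M \|S_M f\|_{L^2(\mu)} = \infty$. Any coefficient sequence with $\sum_{|n|\le M} c_n e^{2\pi inx} \to f$ in $L^2(\mu)$ would also converge in $L^2(dx)$ by the embedding, and uniqueness of Fourier expansions in $L^2(dx)$ forces $c_n = \hat f(n)$. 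The partial sums then coincide with $S_M f$, contradicting the norm blow-up in $L^2(\mu)$.

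The two technical points to handle with care are (i) the explicit $A_2$ failure for a density bounded below (standard singular weights such as $|x|^\alpha$ with $\alpha \in (-1,1)$ still lie in $A_2$, so a lacunary multiscale spike as above is needed to break $A_2$ while preserving $w \ge 1$), and (ii) the invocation of Hunt--Muckenhoupt--Wheeden in its partial-sum form via its reduction to the Riesz projection on $L^2(w\,dx)$, together with the uniqueness of the coefficient sequence that the embedding $L^2(\mu) \hookrightarrow L^2(dx)$ provides.
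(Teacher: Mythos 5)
Your proposal is correct, but it follows a genuinely different route from the paper. The paper does not construct an explicit weight: it starts from a continuous function $f$ whose classical Fourier partial sums $S_M(f)$ diverge at a point (du Bois--Reymond), uses the uniform boundedness principle on the functionals $g\mapsto\int_0^1|S_M(f)|^2g\,dx$ to produce a $g\in L^1([0,1))$ for which $\int_0^1|S_M(f)|^2g\,dx$ is unbounded, and then takes $d\mu=(1+|g|)\,dx$; the contradiction is obtained by identifying any representing coefficients with $\langle f,e^{2\pi inx}\rangle_{L^2(dx)}$ through uniqueness of coefficients in the orthonormal basis $\{Be^{2\pi inx}\}$ itself. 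You instead build a concrete lacunary-spike density $w\ge 1$ that fails $A_2$, invoke Hunt--Muckenhoupt--Wheeden (which the paper only uses later, in Section 8) to get an $f\in L^2(\mu)$ whose symmetric partial sums fail to converge in $L^2(\mu)$, and identify the coefficients via the contractive embedding $L^2(\mu)\hookrightarrow L^2(dx)$ rather than via the orthonormal basis. Both identifications of the coefficients are valid (each $S_M$ is bounded on $L^2(\mu)$ since $w\ge 1$ gives $L^2(\mu)\subseteq L^2(dx)\subseteq L^1(dx)$, so your Banach--Steinhaus step is legitimate, and in fact the weaker statement ``$S_Mf\not\to f$ in $L^2(\mu)$ for some $f$,'' which is the direct contrapositive of the quoted HMW equivalence, already suffices without the blow-up). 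The trade-off: your argument yields an explicit, checkable weight and sits naturally alongside the paper's Section 8 classification (Schauder basis doing frame reconstruction $\Leftrightarrow$ density bounded below and $A_2$), but it leans on the deep HMW theorem; the paper's argument uses only a divergent Fourier series plus uniform boundedness, at the cost of a non-constructive weight $1+|g|$. Your $A_2$-failure computation and the orthonormality/completeness of $\{w^{-1/2}e^{2\pi inx}\}$ in $L^2(\mu)$ are fine as sketched.
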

\begin{proof}
Let $f$ be a bounded continuous function on $[0,1)$ such that
$$\limsup_{M\to \infty}|S_{M}(f)(0)|=\infty$$ where $S_{M}(f)$ denotes the classic Fourier partial sums of $f$. There is a $g\in L^{1}([0,1))$ such that the sequence
$\int_{0}^{1}|S_{M}(f)|^{2}gdx$ is unbounded. Otherwise by principle of uniform boundedness, we would have
$$sup_{n}\|S_{M}(f)\|_{\infty}<\infty.$$

Let $\mu$ be absolutely continuous with respect to Lebesgue measure with Radon-Nikodym derivative $|g|+1$. Note that $\{Be^{2\pi i nx}\}_{n\in \mathbb{Z}}$ is an orthonormal basis where $B\in B(L^{2}(\mu))$ is multiplication by $\frac{1}{\sqrt{1+|g|}}\in L^{\infty}(\mu)$.

However, suppose for the sake of contradiction that for some $\{c_{n}\}\subseteq \mathbb{C}$, $$f=\lim_{M\to \infty} \sum_{n=-M}^{M}c_{n}e^{2\pi i nx}\implies \frac{f}{\sqrt{1+|g|}}=\lim_{M\to \infty} \sum_{n=-M}^{M}c_{n}\frac{e^{2\pi i nx}}{\sqrt{1+|g|}}$$
$$\implies c_{n}=\langle \frac{f}{\sqrt{1+|g|}}, \frac{e^{2\pi inx}}{\sqrt{1+|g|}}\rangle_{\mu}=\langle f, e^{2\pi inx}\rangle_{L^{2}([0,1))}$$ for all $n$ by uniqueness of coefficients from an orthonormal basis.
Then it must be that $\int_{0}^{1}|S_{M}(f)|^{2}(|g|+1)dx$ is a bounded sequence, which is a contradiction.
\end{proof}

In this example, the frame operator associated with $\{e^{2\pi i nx}\}_{n\in \mathbb{Z}}$ is not surjective on $ImB$, but it is densely defined on $Im B$.

\section{A Paley–Wiener type stability theorem for bounded sequences that do frame reconstruction}\label{s5}
The following result is a Paley–Wiener type stability theorem for bounded sequences that do frame reconstruction, which was inspired by the Paley–Wiener type results from Christensen and Zakowicz \cite{Christensen2017Paley-Wiener},\cite{Christensen1995Paley-Wiener}. It shows that if a bounded sequence does frame reconstruction, then sufficiently small $\ell^1$ perturbations preserve this reconstruction property, even if the original sequence is not a frame. In fact, the non-frame property is also stable under such perturbations.
\begin{thm}
Let $\Hcal$ be a non-trivial Hilbert space and suppose $\{f_{n}\}_{n\in I}\subseteq \Hcal$ where $I$ is $\mathbb{N}$ or $\mathbb{Z}$ does (unconditional) frame reconstruction with $B$ and
\[
M := \sup_n \|f_n\| < \infty.
\]
Suppose further \(\{h_n\}_{n\in I}\subseteq \mathcal{H}\) satisfies
\[
\sum_{n\in I} \|f_n - h_n\| \leq \frac{1}{2}\left(\sqrt{M^2 + \frac{1}{\|B\|}} - M \right).
\]
Then \(\{h_n\}\) also does (unconditional) frame reconstruction with operator \newline \((T^{-1})^{*} B\), where
$
T : \mathcal{H} \to \mathcal{H}$ $$ \quad T f := \sum_{n\in I} \langle f, Bh_n \rangle h_n,
$$
and the sum is on $\mathbb{N}$ or $\mathbb{Z}$ in the appropriate way.

Moreover, if \(\{f_n\}\) is \emph{not} a frame, then \(\{h_n\}\) is also \emph{not} a frame.
\end{thm}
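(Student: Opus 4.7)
The plan is to show that the candidate operator $T$ differs from the identity by less than $1$ in norm, which by a Neumann series argument makes $T$ invertible, after which the reconstruction for $\{h_n\}$ drops out by substituting $T^{-1}f$ into its defining series. The main quantitative step, and the place where the exact value $\sqrt{M^2 + \|B\|^{-1}} - M$ is forced, is the estimate of $\|I - T\|$.

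I would start from the telescoping identity
\[
\sum_{n \in I} \langle f, Bf_n\rangle f_n - \sum_{n \in I} \langle f, Bh_n\rangle h_n = \sum_{n \in I} \langle f, B(f_n - h_n)\rangle f_n + \sum_{n \in I} \langle f, Bh_n\rangle (f_n - h_n),
\]
obtained by adding and subtracting $\langle f, Bh_n\rangle f_n$ inside each partial sum. Setting $E := \sum_{n \in I}\|f_n - h_n\|$ and using Cauchy--Schwarz together with $\|f_n\| \leq M$ and $\|h_n\| \leq M + \|f_n - h_n\|$, both series on the right are absolutely norm-convergent, with total norm at most $\|B\|(2ME + E^2)\|f\|$, where one uses $\sum \|f_n-h_n\|^2 \leq (\max_n \|f_n - h_n\|)\, E \leq E^2$. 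Since the first series on the left equals $f$ by hypothesis, this simultaneously establishes that $Tf$ is well defined and that $\|(I - T)f\| \leq \|B\|(2ME + E^2)\|f\|$. Now $E < \sqrt{M^2 + \|B\|^{-1}} - M$ is algebraically equivalent to $(M + E)^2 < M^2 + \|B\|^{-1}$, i.e.\ $\|B\|(2ME + E^2) < 1$, so $\|I - T\| < 1$ and $T \in B(\mathcal{H})$ is invertible. Using $B^{*} = B$ from Theorem \ref{doesframerecprop}(1), substituting $T^{-1}f$ into the definition of $T$ yields
\[
f = \sum_{n \in I} \langle T^{-1}f, Bh_n\rangle h_n = \sum_{n \in I} \langle f, (T^{-1})^{*}Bh_n\rangle h_n,
\]
which is the claimed reconstruction. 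Unconditional convergence is inherited because both error series are absolutely convergent and the $\{f_n\}$-series is unconditionally convergent by hypothesis.

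For the non-frame stability I would argue by contradiction: suppose $\{h_n\}$ is a frame but $\{f_n\}$ is not. The hypothesis $E < \infty$ forces each $\|f_n - h_n\|$ to be bounded, hence $\sum_n \|f_n - h_n\|^2 < \infty$; the standard estimate $(a+b)^2 \leq 2a^2 + 2b^2$ together with the Bessel bound for $\{h_n\}$ then gives a Bessel bound for $\{f_n\}$. But $\{f_n\}$ does frame reconstruction with $B$, so Corollary \ref{almostframecor}(1) would then make $\{f_n\}$ a frame, a contradiction. I expect the main obstacle to be the careful justification that the manipulations of the $\{h_n\}$-series are legitimate rather than merely formal; the absolute convergence of the two error series in the decomposition above is precisely the mechanism that rescues this and that also transfers the mode of convergence from $\{f_n\}$ to $\{h_n\}$.
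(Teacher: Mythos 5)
Your proof is correct and follows essentially the same route as the paper: the same telescoping decomposition into $\ell^1$-controlled error series, the same algebra showing $E<\sqrt{M^2+\|B\|^{-1}}-M$ forces $\|I-T\|<1$ (hence $T$ invertible), and the same substitution $f=T(T^{-1}f)$ giving reconstruction with $(T^{-1})^{*}B$. Your non-frame step (transferring a Bessel bound from $\{h_n\}$ to $\{f_n\}$ via $(a+b)^2\leq 2a^2+2b^2$ and then contradicting Corollary \ref{almostframecor}(1)) is only a cosmetic variant of the paper's direct estimate showing $\sum_n|\langle f,h_n\rangle|^2=\infty$.
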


\begin{proof}
We will assume that $I=\mathbb{N}$ for convenience. The proof for when $I=\mathbb{Z}$ or when the convergence is unconditional is similar.
First, we show that for all $f\in \Hcal$, the sequence $\sum_{n=0}^{M}\langle f, Bh_{n}\rangle h_{n}$ is Cauchy.
For $f\in \Hcal$ and $k,m$ be large, consider
$$\left \|\sum_{n=k}^{m}\langle f, Bh_n\rangle h_n \right\| \leq$$
$$\left \|\sum_{n=k}^{m}\langle f, Bh_n\rangle (h_n-f_n) \right\|+\left\|\sum_{n=k}^{m}\langle f, B(h_n-f_n)\rangle f_n \right\|+ \left \|\sum_{n=k}^{m}\langle f, Bf_n\rangle f_n \right\|.$$
Now it is easy to see that the sequence $\sum_{n=0}^{M}\langle f, Bh_{n}\rangle h_{n}$ is Cauchy by boundedness of $\{f_n\}$ and $\{h_{n}\}$, the convergence of sequence $\sum_{n=0}^{M}\|f_{n}-h_{n}\|$, and fact that the sequence $\sum_{n=0}^{M}\langle f, Bf_{n}\rangle f_{n}$ is Cauchy. Therefore, $T$ is bounded by the principle of uniform boundedness.

Now let $\lambda=\frac{1}{2}\left(\sqrt{M^2 + \frac{1}{\|B\|}} - M \right)$. Consider the following for any $f\in \Hcal$,
$$\left\|\sum_{n=0}^{\infty}\langle f, Bh_{n}\rangle h_{n}-\sum_{n=0}^{\infty}\langle f, Bf_{n}\rangle f_{n} \right \| \leq$$
$$\left \|\sum_{n=0}^{\infty}\langle f, Bh_n\rangle (h_n-f_n)\right \|+ \left\|\sum_{n=0}^{\infty}\langle f, B(h_n-f_n)\rangle f_n\right\| \leq$$ $$\lambda \|f\| \|B\| sup_{n}\|h_n\|+ \|B\| \|f\| \lambda M \leq \lambda \|f\| \|B\| (M+\lambda)+\|B\| \|f\| \lambda M=$$
$$\lambda \|B\|((M+\lambda)+M)\|f\|.$$
Therefore, if $\lambda$ is chosen so that $$\lambda(2M+\lambda)\|B\|<1, $$ $T$ is invertible since 
$$\sum_{n=0}^{\infty}\langle f, Bf_{n}\rangle f_{n}=f$$ for all $f\in \Hcal$.

Now if \(\{f_n\}\) is not a frame, by Corollary \ref{almostframecor}, there exists \(f \in \mathcal{H}\) such that
\[
\sum_{n=0}^\infty |\langle f, f_n \rangle|^2 = \infty.
\]
Using the inequality
$$
|\langle f, h_n \rangle|^2 \geq (|\langle f,f_n\rangle|-|\langle f,f_n-h_n\rangle|)^{2}
\geq  |\langle f, f_n \rangle|^2 - 2|\langle f, f_n \rangle|\|f\| \|f_n-h_n\| ,
$$
and noting that \(\sum_{n\in I} \|f_n - h_n\| < \infty\) while sequence \(|\langle f, f_n \rangle|\) is bounded, it follows that
\[
\sum_{n\in I} |\langle f, h_n \rangle|^2 = \infty.
\]
Hence, \(\{h_n\}\) cannot be a frame.
\end{proof}

\begin{cor}
Let $\{f_{n}\}\subseteq \ell^{2}(\mathbb{N})$ be from Example \ref{framereconex2}. For any $\{h_{n}\}\subseteq \ell^{2}(\mathbb{N})$ such that
$$\sum_{n=0}^{\infty}\|f_n-h_n\|\leq \frac{\sqrt{2}-1}{2},$$ $\{h_{n}\}$ is a bounded sequence that does unconditional frame reconstruction and is not a frame.
\end{cor}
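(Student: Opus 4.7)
The plan is to apply the preceding Paley--Wiener type stability theorem directly to the specific sequence from Example \ref{framereconex2}, since the bound $\sqrt{2}-1$ appearing in the corollary is exactly the constant $\sqrt{M^2 + 1/\|B\|} - M$ that the theorem produces for this particular choice of $\{f_n\}$ and $B$.

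First I would extract the relevant constants. The sequence from Example \ref{framereconex2} is $\{e_0, e_1, e_1, e_2, e_2, e_2, \dots\}$, each entry being a unit vector, so $M := \sup_n \|f_n\| = 1$. The operator $B$ defined there acts diagonally by $Be_n = e_n/(n+1)$, so $\|B\| = \sup_n 1/(n+1) = 1$. Plugging these into the perturbation threshold from the Paley--Wiener theorem gives
\[
\sqrt{M^2 + \tfrac{1}{\|B\|}} - M = \sqrt{1+1} - 1 = \sqrt{2}-1,
\]
which matches the hypothesis on $\{h_n\}$ in the corollary exactly.

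Next I would verify the hypotheses of the theorem for the pair $(\{f_n\}, B)$. Example \ref{framereconex2} already establishes that $\{f_n\}$ does unconditional frame reconstruction with $B$, and it also records that $\{f_n\}$ fails to be a Bessel sequence, so in particular it is not a frame. Therefore the stability theorem applies and immediately yields that $\{h_n\}$ does unconditional frame reconstruction (with operator $(T^{-1})^{*}B$) and that $\{h_n\}$ is not a frame.

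The only remaining point is boundedness of $\{h_n\}$, which is a one-line check using the triangle inequality: for every $n$,
\[
\|h_n\| \leq \|f_n\| + \|f_n - h_n\| \leq 1 + \sum_{k=0}^{\infty}\|f_k - h_k\| < 1 + (\sqrt{2}-1) = \sqrt{2}.
\]
There is no genuine obstacle here; the corollary is essentially just exhibiting that Example \ref{framereconex2} saturates the perturbation bound of the theorem, and the mild task is simply matching up the constants $M=\|B\|=1$ so that the abstract threshold collapses to the concrete value $\sqrt{2}-1$.
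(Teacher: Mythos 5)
Your proposal is correct and is exactly the argument the paper intends: the corollary is stated as an immediate application of the preceding stability theorem with $M=1$ and $\|B\|=1$ from Example \ref{framereconex2}, so the threshold collapses to $\sqrt{2}-1$, and boundedness of $\{h_n\}$ is the same triangle-inequality observation.
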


\section{A special weighted frame classification for sequences that do frame reconstruction}\label{s6}

In this section, we connect sequences that do frame reconstruction with weighted frames. Applying the conjecture in \cite{Balazs2023Weighted}, which is now addressed with a proof by Tselishchev in \cite{Tselishchev2025Rescaling}, to our setting, Balazs, Corso, and Stoeva, would say that if $\{f_n\}$ does unconditional frame reconstruction, then $\{f_n\}$ is a weighted frame. While were unable to prove that sequences that do unconditional frame reconstruction are weighted frames, we will provide necessary and sufficient conditions for which norm bounded sequences $\{f_n\}$ that do frame reconstruction (or sequences that do unconditional frame reconstruction) yield $\left\{\frac{f_n}{\|f_n\|} \right\}$ as a frame. We do this by utilizing a result of Yu in \cite{Yu2024Frame}:
\begin{rem}
Yu proves the following stronger statement in \cite{Yu2024Frame} even though it is not stated this way:
\end{rem}

\begin{thm*}[Yu]
Let $\Hcal$ be a Hilbert space and suppose $\left\{\frac{f_n}{\|f_n\|} \right\}_{n\in I}\subseteq \Hcal$ is a Bessel sequence. Then $\{f_{n}\}_{n\in I}$
is a finite union of unconditional Schauder sequences.
\end{thm*}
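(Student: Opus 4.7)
The plan is to deduce Yu's statement from the Marcus--Spielman--Srivastava resolution of the Feichtinger conjecture, combined with the elementary observation that the unconditional Schauder basic property of a sequence is preserved under positive rescaling of its vectors.

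First, observe that $\{f_n/\|f_n\|\}_{n\in I}$ is a unit-norm Bessel sequence in $\Hcal$. The Feichtinger conjecture, proved by Marcus, Spielman, and Srivastava as a consequence of their solution of the Kadison--Singer problem, then yields a finite partition $I=I_1\sqcup\cdots\sqcup I_K$ such that each subfamily $\{f_n/\|f_n\|\}_{n\in I_k}$ is a Riesz basic sequence, i.e., a Riesz basis of its closed linear span. By the Gohberg theorem quoted in the preliminaries, each such subfamily is in particular an unconditional Schauder basis of its closed linear span.

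The second step is a rescaling lemma. Suppose $\{g_n\}$ is an unconditional Schauder basis of $V:=\overline{\mathrm{span}}\{g_n\}$ with biorthogonal system $\{h_n\}\subseteq V$, and let $c_n>0$ be arbitrary positive scalars. Then $\{c_n g_n\}$ is still an unconditional Schauder basis of $V$, with biorthogonal system $\{h_n/c_n\}$. The biorthogonality $\langle c_n g_n,\, h_m/c_m\rangle=\delta_{nm}$ is immediate, and the two expansions
\[
v=\sum_n\langle v,h_n\rangle g_n=\sum_n\left\langle v,\frac{h_n}{c_n}\right\rangle(c_n g_n)
\]
have identical partial sums under any reordering or restriction to a finite subset, so unconditional convergence transfers term by term. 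Bounded norms are generally lost under the rescaling, but the definition of an unconditional Schauder basis does not require them.

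Applying this rescaling with $c_n=\|f_n\|$ to each Riesz block $\{f_n/\|f_n\|\}_{n\in I_k}$ shows that $\{f_n\}_{n\in I_k}$ is an unconditional Schauder basis of its closed linear span. Hence $\{f_n\}_{n\in I}$ is the union of the $K$ unconditional Schauder sequences $\{f_n\}_{n\in I_k}$. The main obstacle is clearly the first step: the Feichtinger/Marcus--Spielman--Srivastava theorem is a very deep result whose proof lies well outside frame theory. Once it is invoked, the remaining rescaling argument is a formal verification.
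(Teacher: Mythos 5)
Your argument is correct as written: the normalized sequence $\{f_n/\|f_n\|\}_{n\in I}$ is a unit-norm Bessel sequence, the Marcus--Spielman--Srivastava resolution of Kadison--Singer (via the Casazza--Tremain equivalence with the Feichtinger conjecture) partitions it into finitely many Riesz basic sequences, and your rescaling lemma is sound because the two expansions $\sum_n\langle v,h_n\rangle g_n$ and $\sum_n\langle v,h_n/c_n\rangle(c_ng_n)$ are literally the same series term by term, so biorthogonality and unconditional convergence transfer, and the paper's definition of an unconditional Schauder basis imposes no norm bounds. Note, however, that the paper contains no internal proof of this statement to compare against: it is quoted as a theorem of Yu, with the remark that Yu's paper proves it implicitly. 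What is worth observing is that your reliance on the deep MSS input is not an artifact of your approach but essentially forced: running your rescaling lemma backwards together with the quoted Gohberg theorem (a unit-norm unconditional basic sequence is a Riesz basic sequence, since its norms are bounded above and below) shows that the stated theorem is \emph{equivalent} to the Feichtinger conjecture for unit-norm Bessel sequences. So any proof must pass through Kadison--Singer in some form, and your derivation is the natural one; the only caveats are the implicit assumptions that no $f_n$ is zero (needed even to state the hypothesis) and that each Riesz block's dual system is taken inside its closed span, both of which are harmless.
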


\begin{thm}
Let $\Hcal$ be a Hilbert space and suppose  $\{f_{n}\}_{n\in I}\subseteq \Hcal$ where $I$ is $\mathbb{N}$ or $\mathbb{Z}$ consists of no zero vectors. If one of the following holds:
\begin{enumerate}
    \item $\{f_{n}\}_{n\in I}$ does frame reconstruction with $B$ and \[
M := \sup_n \|f_n\| < \infty.
\]
\item $\{f_{n}\}_{n\in I}$ does unconditional frame reconstruction with $B$.
\end{enumerate}
 Then $\left\{\frac{f_n}{\|f_n\|} \right\}_{n\in I}$ and $\{\|f_n\|Bf_n\}_{n\in I}$ are dual frames if and only if $\{f_{n}\}_{n\in I}$ is a finite union of sequences that are unconditional Schauder bases for their closed spans.
\end{thm}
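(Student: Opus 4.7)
The plan is to prove each direction of the biconditional in turn. For the "only if" direction, if $\left\{\frac{f_n}{\|f_n\|}\right\}_{n\in I}$ and $\{\|f_n\|Bf_n\}_{n\in I}$ are dual frames, then in particular $\left\{\frac{f_n}{\|f_n\|}\right\}_{n\in I}$ is a Bessel sequence, and Yu's theorem quoted above immediately gives that $\{f_n\}_{n\in I}$ is a finite union of unconditional Schauder sequences, which is the desired conclusion.

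For the "if" direction, suppose $\{f_n\}_{n\in I}$ is a finite union of sequences that are unconditional Schauder bases for their closed spans. My plan is to verify four things: (a) $\left\{\frac{f_n}{\|f_n\|}\right\}_{n\in I}$ is Bessel, (b) $\{\|f_n\|Bf_n\}_{n\in I}$ is Bessel, (c) the reconstruction identity $f = \sum_{n\in I} \langle f, \|f_n\|Bf_n\rangle \frac{f_n}{\|f_n\|}$ holds, which is just a regrouping of the frame reconstruction $\sum_{n\in I} \langle f, Bf_n\rangle f_n = f$, and (d) two Bessel sequences satisfying such a reconstruction identity are automatically dual frames. For (d), the standard argument is to apply Cauchy--Schwarz to
$$\|f\|^2 = \sum_{n\in I} \langle f, \|f_n\|Bf_n\rangle \overline{\left\langle f, \tfrac{f_n}{\|f_n\|}\right\rangle}$$
and combine with the Bessel bound of one sequence to recover the lower frame bound for the other.

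For step (a), I would observe that normalizing an unconditional Schauder basis preserves the basis property (the biorthogonal system rescales by the reciprocal scalar), so each normalized subsequence is a normalized unconditional Schauder basis for its closed span; Gohberg's first quoted theorem then promotes each to a Riesz basis for its closed span, which is therefore Bessel on $\mathcal{H}$, and a finite union of Bessel sequences is Bessel. For step (b) in case (1), the bound $\|f_n\|\le M$ combined with part (3) of Theorem \ref{doesframerecprop} gives
$$\sum_{n\in I}|\langle f, \|f_n\|Bf_n\rangle|^2 \le M^2 \sum_{n\in I}|\langle f, Bf_n\rangle|^2 \le M^2\|B\|\|f\|^2.$$
For step (b) in case (2), the unconditional convergence of $\sum_n \langle f, Bf_n\rangle f_n$ together with Gohberg's second quoted theorem yields $\sum_n |\langle f, \|f_n\|Bf_n\rangle|^2 < \infty$ for each $f$, and the uniform boundedness principle applied to the finite truncations of the analysis operator upgrades this pointwise finiteness to a uniform Bessel bound.

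The main obstacle I anticipate is step (b) in case (2): without an a priori norm bound on $\{f_n\}$, one must lean on the unconditional convergence of the reconstruction series together with a uniform boundedness argument rather than on a direct estimate via Theorem \ref{doesframerecprop}(3).
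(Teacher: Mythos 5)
Your proposal is correct and follows essentially the same route as the paper: Yu's theorem for the "only if" direction, and for the "if" direction the Gohberg Riesz-basis argument to get $\left\{\frac{f_n}{\|f_n\|}\right\}$ Bessel, plus the same two case-splits (the bound from Theorem \ref{doesframerecprop}(3) with $\|f_n\|\le M$, and unconditional convergence giving $\sum_n|\langle f,Bf_n\rangle|^2\|f_n\|^2<\infty$ followed by uniform boundedness) to get $\{\|f_n\|Bf_n\}$ Bessel. Your steps (c) and (d) merely make explicit the regrouping and the standard fact that two Bessel sequences satisfying the reconstruction identity are dual frames, which the paper folds into its definition of dual frames.
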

\begin{proof}
Suppose that $\left\{\frac{f_n}{\|f_n\|} \right\}_{n\in I}$ and $\{\|f_n\|Bf_n\}_{n\in I}$ are dual frames. Then the result follows by the theorem of Yu.

Suppose that $\{f_{n}\}_{n\in I}$ is a finite union of sequences that are unconditional Schauder bases for their closed spans. Then $\left\{\frac{f_n}{\|f_n\|} \right\}_{n\in I}$ is a finite union of sequences that are Riesz bases for their closed spans. In particular, $\left\{\frac{f_n}{\|f_n\|} \right\}_{n\in I}$ is a Bessel sequence. We conclude by arguing that $\{\|f_n\|Bf_n\}_{n\in I}$ is a Bessel sequence. If condition $(1)$ holds, then the result follows from $\{Bf_n\}_{n\in I}$ being a Bessel sequence by Theorem \ref{doesframerecprop}. If condition $(2)$ holds, then for all $f\in \Hcal$,
$$\sum_{n\in I}|\langle f, Bf_n\rangle |^{2}\|f_n\|^{2}<\infty,$$ and the result follows.
\end{proof}

\section{Classification of unconditional Schauder bases that do frame reconstruction}\label{s7}

In this section, we classify all unconditional Schauder bases that do frame reconstruction. We do this by utilizing the following result in \cite{Christensen2016Frames}:

\begin{thm*}[Christensen, Lemma 3.3.3]
Let $\Hcal$ be a Hilbert space and suppose  $\{f_{n}\}_{n=0}^{\infty}\subseteq \Hcal$ is a Schauder basis with associated biorthogonal system $\{g_{k}\}$ where $\{g_{k}\}$ is a Bessel sequence with bound $\mathcal{B}$. Then
$$\frac{1}{\mathcal{B}}\sum_{n=0}^{\infty} |c_{n}|^{2}\leq \left \|\sum_{n=0}^{\infty} c_n f_n \right \|^{2}$$
for all finite sequences $\{c_n\}_{n=0}^{\infty}$.
\end{thm*}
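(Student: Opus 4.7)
The plan is to deduce the lower bound as a direct application of Bessel's inequality for $\{g_k\}$, using the biorthogonality relation to identify the coefficients $c_n$ with inner products $\langle f, g_k\rangle$. Since the statement only concerns finitely supported sequences, no convergence issue arises and the whole argument should fit in a few lines.

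First, I would fix a finitely supported sequence $\{c_n\}_{n=0}^{\infty}$ and set $f := \sum_{n=0}^{\infty} c_n f_n \in \Hcal$, which is a genuine finite sum. Using linearity of the inner product together with the biorthogonality relation $\langle f_n, g_k\rangle = \delta_{nk}$ supplied by the Schauder basis definition, one immediately computes $\langle f, g_k\rangle = c_k$ for every $k \in \N$ (with only finitely many of these inner products being nonzero).

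Next, I would apply the Bessel bound for $\{g_k\}$ to the vector $f$ to obtain
$$\sum_{k=0}^{\infty}|\langle f, g_k\rangle|^{2} \leq \mathcal{B}\|f\|^{2}.$$
Substituting $\langle f, g_k\rangle = c_k$ on the left and dividing through by $\mathcal{B}$ would then yield
$$\frac{1}{\mathcal{B}}\sum_{n=0}^{\infty}|c_n|^{2} \leq \|f\|^{2} = \left\|\sum_{n=0}^{\infty} c_n f_n\right\|^{2},$$
which is precisely the desired inequality.

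There is no genuine obstacle in this argument --- the estimate is essentially an immediate corollary of Bessel's inequality once biorthogonality is used to identify the expansion coefficients. It is worth noting that the Schauder-basis reconstruction formula $f = \sum_n \langle f, g_n\rangle f_n$ is not even needed here; only the existence of a biorthogonal Bessel sequence matters for this finite-sum inequality. In the larger context of Section \ref{s7}, this lower Riesz-type bound, together with the unconditional Schauder basis hypothesis and the Gohberg characterization from the preliminaries, should be what forces $\{f_n\}$ to be a Riesz basis, driving the intended classification of unconditional Schauder bases that do frame reconstruction.
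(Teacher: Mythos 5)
Your proof is correct and is the standard argument: biorthogonality identifies the coefficients as $c_k=\langle f,g_k\rangle$ for the finite sum $f=\sum_n c_nf_n$, and the Bessel bound for $\{g_k\}$ immediately gives $\frac{1}{\mathcal{B}}\sum_n|c_n|^2\leq\|f\|^2$. The paper itself states this result without proof, citing Christensen's Lemma 3.3.3, and your argument is essentially the same short Bessel-plus-biorthogonality computation found there, so nothing further is needed.
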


\begin{thm}
Let $\Hcal$ be a Hilbert space and suppose  $\{f_{n}\}_{n\in I}\subseteq \Hcal$ is an unconditional Schauder basis. Then $\{f_{n}\}_{n\in I}$ does frame reconstruction if and only if $$ \inf_{n}
\|f_n\|>0.$$
\end{thm}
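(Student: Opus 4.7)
The plan is to prove both implications by identifying the frame reconstruction operator $B$ with the map sending each $f_n$ to its biorthogonal partner $g_n$, where $\{g_n\}$ is the biorthogonal system guaranteed by the Schauder basis definition.

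For the forward direction, I would assume $\{f_n\}$ does frame reconstruction with $B$ and exploit uniqueness of Schauder basis coefficients. Both expansions $\sum_n \langle f, g_n\rangle f_n$ and $\sum_n \langle f, Bf_n\rangle f_n$ equal $f$, so $Bf_n = g_n$ for every $n$. Theorem \ref{doesframerecprop}(3) then gives $\sum_n |\langle f, g_n\rangle|^2 \leq \|B\|\,\|f\|^2$, so $\{g_n\}$ is Bessel with bound $\|B\|$. Feeding $c_k = \delta_{nk}$ into the quoted Christensen lemma yields $\|f_n\|^2 \geq 1/\|B\|$, hence $\inf_n \|f_n\| > 0$.

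For the backward direction, I would set $\alpha := \inf_n \|f_n\| > 0$ and normalize $\tilde{f}_n := f_n/\|f_n\|$. Rescaling preserves unconditional convergence of the expansion, so $\{\tilde{f}_n\}$ is again an unconditional Schauder basis, with biorthogonal system $\tilde{g}_n = \|f_n\| g_n$. Since $\|\tilde{f}_n\| = 1$ for every $n$, Gohberg's theorem upgrades $\{\tilde{f}_n\}$ to a Riesz basis; its biorthogonal dual $\{\tilde{g}_n\}$ is therefore also a Riesz basis, in particular a Bessel sequence with some bound $B_g$. The chain
$$\sum_n \|f_n\|^2 |\langle f, g_n\rangle|^2 = \sum_n |\langle f, \tilde{g}_n\rangle|^2 \leq B_g \|f\|^2$$
combined with $\|f_n\| \geq \alpha$ shows that $\{g_n\}$ itself is Bessel with bound $B_g/\alpha^2$. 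I would then define $Bh := \sum_n \langle h, g_n\rangle g_n$, the frame operator of $\{g_n\}$; biorthogonality gives $Bf_n = g_n$, and the Schauder basis expansion gives $\sum_n \langle f, Bf_n\rangle f_n = \sum_n \langle f, g_n\rangle f_n = f$, which is the desired frame reconstruction.

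The main obstacle is the bookkeeping in the sufficient direction, because the interesting case is $\sup_n \|f_n\| = \infty$ (as in Example \ref{firstnonframeex}), which rules out any direct Riesz basis argument on $\{f_n\}$ itself. Normalizing first to invoke Gohberg, and then using only the lower bound $\alpha > 0$ to transport the Bessel property from $\{\tilde{g}_n\}$ back to $\{g_n\}$, is the key maneuver that distinguishes this setting from the classical Riesz basis case.
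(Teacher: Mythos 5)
Your proof is correct, and the necessity direction is essentially the paper's argument: identify $Bf_n$ with the biorthogonal functionals $g_n$, use Theorem \ref{doesframerecprop}(3) to see $\{Bf_n\}$ is Bessel with bound $\|B\|$, and feed standard unit coefficient sequences into Christensen's lemma to get $\|f_n\|^2\geq 1/\|B\|$. In the sufficiency direction you follow the same skeleton as the paper up to a point (normalize, invoke Gohberg to get a Riesz basis, pass to its biorthogonal Riesz dual, and use $\inf_n\|f_n\|\geq\alpha>0$ to conclude that $\{g_n\}$ itself is Bessel), but you finish differently: you take $B$ to be the Bessel (frame-type) operator $Bh=\sum_n\langle h,g_n\rangle g_n$ of the biorthogonal system, which is bounded because $\{g_n\}$ is Bessel, satisfies $Bf_n=g_n$ by biorthogonality, and then the Schauder expansion immediately gives $f=\sum_n\langle f,Bf_n\rangle f_n$. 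The paper instead builds $B$ as the continuous extension of the densely defined map $f_n\mapsto S^{-1}f_n/\|f_n\|^2=g_n$, proving boundedness by sandwiching $\sum_n|c_n|^2$ between the synthesis bound for the Bessel sequence $\{g_n\}$ and the lower bound from Christensen's lemma. Your route avoids both the second use of Christensen's lemma and the extension-by-density argument, so it is marginally cleaner; by the uniqueness of the reconstruction operator the two constructions produce the same $B$. One small point worth making explicit in a final write-up: for $I=\mathbb{Z}$ the definition requires convergence of symmetric partial sums, which is covered because the basis is unconditional (so in fact you obtain unconditional frame reconstruction, consistent with the remark following the theorem).
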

\begin{proof}
Note, $\left\{\frac{f_n}{\|f_n\|} \right\}_{n\in I}$ is a Riesz basis with dual frame $\left\{S^{-1}\frac{f_n}{\|f_n\|} \right\}_{n\in I}$ where $S$ denotes the frame operator corresponding to $\left\{\frac{f_n}{\|f_n\|} \right\}_{n\in I}$. 

Suppose that $\{f_{n}\}_{n\in I}$ does frame reconstruction with operator $B$. It easily follows that $$\langle f_n, Bf_k\rangle=\delta_{nk}$$ by the basis property. Furthermore, since $\{Bf_n\}$ is a Bessel sequence by Theorem \ref{doesframerecprop}, the previous theorem applies and shows that $$ \inf_n
\|f_n\|>0.$$

Now suppose that $$ \inf_n
\|f_n\|>0.$$ Now since $\left\{S^{-1}\frac{f_n}{\|f_n\|} \right\}_{n\in I}$ is a Riesz basis, it follows that $\left\{S^{-1}\frac{f_n}{\|f_n\|^{2}} \right\}$ is a Bessel sequence and
$$\left\langle f_n, S^{-1}\frac{f_k}{\|f_k\|^{2}} \right \rangle=\delta_{nk},$$
and the previous theorem applies.
We conclude the proof by showing that the linear map defined on $span\{f_n\}$ given by $f_n\to S^{-1}\frac{f_n}{\|f_n\|^{2}}$ is bounded. By the previous theorem and since $\left\{S^{-1}\frac{f_n}{\|f_n\|^{2}} \right\}$ is a Bessel sequence with bound $\mathcal{B}$,
$$\frac{1}{\mathcal{B}}\left\|\sum_{n=0}^{\infty}c_nS^{-1}\frac{f_n}{\|f_n\|^{2}} \right\|^{2}\leq \sum_{n=0}^{\infty} |c_{n}|^{2}\leq \mathcal{B}\left \|\sum_{n=0}^{\infty} c_n f_n \right \|^{2}$$
for all finite sequences $\{c_n\}_{n=0}^{\infty}$, and the result follows.
\end{proof}

\begin{rem}
In other words, an $\omega-$linearly independent sequence does unconditional frame reconstruction
if and only if its a norm bounded below unconditional Schauder basis.
\end{rem}

\begin{rem}
Note that an $\omega-$linearly independent frame must be a Riesz basis, but an $\omega-$linearly independent sequence that does frame reconstruction need not be a Riesz basis as we have seen in Example \ref{firstnonframeex}.
\end{rem}

\begin{cor}
A norm bounded unconditional Schauder basis does frame reconstruction if and only if its a Riesz basis.
\end{cor}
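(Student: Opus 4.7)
The plan is to obtain this corollary as an immediate consequence of the preceding theorem combined with Gohberg's characterization of Riesz bases. The preceding theorem classifies the unconditional Schauder bases that do frame reconstruction as exactly those satisfying $\inf_n \|f_n\| > 0$, while Gohberg's theorem (stated in the Preliminaries) says that an unconditional Schauder basis is a Riesz basis precisely when it satisfies both $\inf_n \|f_n\| > 0$ and $\sup_n \|f_n\| < \infty$. So under the standing assumption that $\sup_n \|f_n\| < \infty$, these two conditions collapse into one.

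Concretely, I would argue as follows. For the forward direction, assume $\{f_n\}_{n \in I}$ is a norm bounded unconditional Schauder basis that does frame reconstruction. By the preceding theorem, $\inf_n \|f_n\| > 0$. Combined with the hypothesis $\sup_n \|f_n\| < \infty$ and the unconditional Schauder basis property, Gohberg's theorem yields that $\{f_n\}_{n \in I}$ is a Riesz basis. For the converse, suppose $\{f_n\}_{n \in I}$ is a Riesz basis. Then it is in particular an unconditional Schauder basis, and Gohberg's theorem supplies $\inf_n \|f_n\| > 0$. Applying the preceding theorem in the other direction gives that $\{f_n\}_{n \in I}$ does frame reconstruction.

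There is no real obstacle here: the corollary is essentially a bookkeeping step that records what happens when one intersects the hypothesis of norm boundedness with the classification theorem. The only thing worth emphasizing in the write-up is that the norm boundedness hypothesis is exactly the ingredient needed to promote $\inf_n \|f_n\| > 0$ to the full two sided bound required by Gohberg, so that the equivalence becomes an equivalence rather than a one-sided implication.
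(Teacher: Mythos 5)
Your proposal is correct and is exactly the intended argument: the corollary is the preceding classification theorem ($\inf_n\|f_n\|>0$) intersected with the norm boundedness hypothesis, with Gohberg's theorem converting the two-sided norm bound plus unconditionality into the Riesz basis property. The paper leaves the proof implicit, and your write-up matches that route in both directions.
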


\section{Classification of Exponential Schauder bases that do frame reconstruction}\label{s8}
We conclude by classifying all finite Borel measures $\mu$ on $[0,1)$ that have the property that $\{ e^{2 \pi i n x} \}_{n \in \mathbb{Z}} \subseteq L^2(\mu)$ is a Schauder basis that does frame reconstruction. The results of this section are motivated by the study of operator orbit frames in \cite{Berner2024Operator}. The conclusions of the following Lemma and Corollary align with those obtained there, now within the context of sequences that do frame reconstruction. Furthermore, the following theorem in \cite{Berner2024Frame-like} will help with the classification, and its proof was inspired from the work of Lai \cite{Lai2011Fourier}:
\begin{thm*}[Berner]
Let $\mu$ be a finite Borel measure on $[0,1)$, and let $g$ denote the Radon–Nikodym derivative of its absolutely continuous part with respect to Lebesgue measure. Suppose that there is a Bessel sequence $\{g_{n}\}$ with bound $B$ such that
$$\lim_{M\to \infty}\sum_{n=-M}^{M}\langle f, g_{n}\rangle e^{2\pi i nx}=f$$
for all $f\in L^{2}(\mu)$ with convergence in norm, then $$g(x)>\frac{1}{3B}$$ Lebesgue almost everywhere on its support.
\end{thm*}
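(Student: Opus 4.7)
The plan is to extract a lower frame-type bound for the exponentials in $L^2(\mu)$ from the reconstruction hypothesis, and then test this bound against characteristic functions of carefully chosen Borel subsets of $[0,1)$ to pin down $g$ pointwise.

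First I would establish that for every $f\in L^2(\mu)$,
$$\|f\|_{\mu}^{2} \leq B\sum_{n\in\mathbb{Z}} |\widehat{f\mu}(n)|^{2},$$
where $\widehat{f\mu}(n) := \int e^{-2\pi i nx} f(x)\, d\mu(x)$. The argument is brief: by continuity of the inner product applied to the reconstruction formula, $\|f\|_\mu^2 = \lim_{M\to\infty}\sum_{n=-M}^{M} \langle f, g_n\rangle_\mu \overline{\widehat{f\mu}(n)}$. Applying Cauchy--Schwarz to each symmetric partial sum and using the Bessel bound $\sum_n |\langle f, g_n\rangle|^2 \leq B\|f\|^2$ yields the inequality after letting $M\to\infty$. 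Crucially, this argument only needs the symmetric (conditional) convergence hypothesized in the theorem, because we bound each finite partial sum first and pass to the limit last.

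Next I would choose test functions adapted to the Lebesgue decomposition $\mu = g\,dx + \mu_s$. Since $\mu_s\perp dx$, the singular part $\mu_s$ is concentrated on some Lebesgue-null Borel set $N$. For any Borel set $E\subseteq [0,1)\setminus N$ on which $g$ is bounded, applying the preceding inequality to $f=\chi_E$ gives $\|\chi_E\|_\mu^2 = \int_E g\,dx$ (since $\mu_s(E)=0$) and $\widehat{\chi_E\mu}(n) = \widehat{g\chi_E}(n)$. Parseval on $L^2([0,1))$ then yields $\sum_n |\widehat{\chi_E\mu}(n)|^2 = \int_E g^2\,dx$, and combining produces the concrete integral inequality
$$\int_E g\,dx \leq B\int_E g^2\,dx.$$

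To conclude, I would argue by contradiction. Suppose the set $F := \{0 < g \leq \tfrac{1}{3B}\}$ has positive Lebesgue measure, and take $E := F\setminus N$, so that $|E|=|F|>0$, $\mu_s(E)=0$, and $g\leq \tfrac{1}{3B}$ on $E$. Then $\int_E g^2\,dx \leq \tfrac{1}{3B}\int_E g\,dx$, which combined with the integral inequality above yields $\int_E g\,dx \leq \tfrac{1}{3}\int_E g\,dx$, contradicting $\int_E g\,dx > 0$. The main subtlety of the proof is the bookkeeping around the singular part: one must arrange the test set $E$ to miss the carrier of $\mu_s$ so that the Lebesgue-side Parseval identity applies cleanly, which is why the Lebesgue-null concentration set $N$ must be removed explicitly before choosing $E$.
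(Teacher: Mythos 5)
Your proof is correct. One remark on context: the paper does not prove this statement itself --- it is quoted from the author's earlier work (the proof there being modeled on Lai's localization argument) --- so there is no internal proof to compare against, and your argument stands as a self-contained alternative. Your route is: (i) from the symmetric partial sums, continuity of the inner product, Cauchy--Schwarz on each finite sum, and the Bessel bound, deduce the lower frame-type inequality $\|f\|_{\mu}^{2}\leq B\sum_{n\in\mathbb{Z}}|\widehat{f\mu}(n)|^{2}$ for all $f\in L^{2}(\mu)$ (correctly handling the merely conditional convergence by bounding partial sums before passing to the limit); (ii) test with $f=\chi_{E}$ where $E$ is the sublevel set $\{0<g\leq \tfrac{1}{3B}\}$ with the Lebesgue-null carrier of $\mu_{s}$ removed, so that $\mu(E)=\int_{E}g\,dx$ and Plancherel gives $\sum_{n}|\widehat{\chi_{E}\mu}(n)|^{2}=\int_{E}g^{2}\,dx$, yielding $\int_{E}g\,dx\leq B\int_{E}g^{2}\,dx$ and a contradiction if $|E|>0$. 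This is cleaner than the localization-at-Lebesgue-points approach that presumably produces the factor $3$ in the quoted constant: indeed, applying your argument to $\{0<g\leq c\}$ for any $c<1/B$ shows the stronger bound $g\geq 1/B$ Lebesgue-a.e.\ on $\{g>0\}$, which implies the stated $g>\tfrac{1}{3B}$. The only interpretive point is that ``support'' must be read as the essential support $\{g>0\}$, which is exactly how the paper uses the result later (to make multiplication by $1/g$ bounded on $L^{2}(\mu)$), and that is precisely what your argument establishes.
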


We also utilize the celebrated result about what weights for Lebesgue measure allow convergence of classical Fourier partial sums \cite{Hunt1973Weighted}:
\begin{thm*}[Hunt, Muckenhoupt, and Wheeden]\label{Mthm}
Let $w\in L^{1}([0,1))$. The following are equivalent:
\begin{enumerate}
    \item $w$ satisfies the $A_2$ condition.
    \item If $\int_{0}^{1}|f|^{2}wdx<\infty$, then
    $$\lim_{M\to \infty}\int_{0}^{1}|S_M(f)-f|^{2}wdx=0$$ where $S_{M}$ denotes the classic Fourier partial sum of $f$.
\end{enumerate}
\end{thm*}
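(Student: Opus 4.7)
My plan is to reduce the theorem to the classical boundedness equivalences for the Hilbert transform on weighted $L^2$ spaces, and then separately address necessity and sufficiency. First I would observe that trigonometric polynomials are dense in $L^2(w)$ (since $w\in L^1$), so by the Banach--Steinhaus theorem condition (2) is equivalent to the existence of a constant $C$ with $\|S_M f\|_{L^2(w)}\leq C\|f\|_{L^2(w)}$ for all $M$ and all $f\in L^2(w)$. Standard manipulations write $S_M$ as a linear combination of modulations of the Riesz projection $P$ (the projection onto analytic polynomials), so uniform boundedness of $\{S_M\}$ on $L^2(w)$ is equivalent to the boundedness of $P$, equivalently of the periodic Hilbert transform $H$, on $L^2(w)$. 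The theorem thus reduces to the statement: $H: L^2(w)\to L^2(w)$ is bounded if and only if $w\in A_2$.

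For the necessity direction, assuming $H$ is bounded on $L^2(w)$, I would test against the specific function $f = w^{-1}\mathbf{1}_I$ for an arbitrary interval $I$. A direct pointwise estimate on the kernel $\cot(\pi(x-y))$ gives the lower bound $|Hf(x)|\gtrsim \frac{1}{|I|}\int_I w^{-1}$ on a comparable interval $I^*$ adjacent to $I$. Integrating $|Hf|^2 w$ over $I^*$ and comparing to $\|f\|_{L^2(w)}^2 = \int_I w^{-1}$ produces the $A_2$ inequality
\[
\Bigl(\frac{1}{|I|}\int_{I^{*}} w\Bigr)\Bigl(\frac{1}{|I|}\int_I w^{-1}\Bigr) \leq C,
\]
and since $I$ was arbitrary, a standard averaging over translates yields $w\in A_2$.

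The sufficiency direction, which is the main obstacle, follows the Hunt--Muckenhoupt--Wheeden program in three steps. First, I would prove Muckenhoupt's theorem that the Hardy--Littlewood maximal operator $\mathcal{M}$ is bounded on $L^2(w)$ precisely when $w\in A_2$, using the reverse H\"older inequality (equivalently the openness of the $A_p$ classes) to upgrade an $A_2$ weight to an $A_{2-\epsilon}$ weight, from which one interpolates with the trivial $L^\infty$ bound. Second, I would establish a good-$\lambda$ inequality
\[
|\{|Hf|>2\lambda,\, \mathcal{M}f\leq \gamma\lambda\}| \leq C\gamma\,|\{|Hf|>\lambda\}|
\]
for sufficiently small $\gamma$, proved via a Calder\'on--Zygmund stopping time decomposition of $Hf$ on the level set $\{\mathcal{M}f>\lambda\}$, exploiting the weak $(1,1)$ bound for $H$. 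Third, since $A_2\subseteq A_\infty$, I would transfer this Lebesgue-measure inequality to the weighted measure $w\,dx$ using the defining $A_\infty$ estimate, and then integrate in $\lambda$ to conclude $\|Hf\|_{L^2(w)}\lesssim \|\mathcal{M}f\|_{L^2(w)}\lesssim \|f\|_{L^2(w)}$.

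The hardest step by far is the good-$\lambda$ inequality together with its transfer to the weighted setting via the $A_\infty$ property. The preceding reductions are functional-analytic and the necessity test is elementary, but the good-$\lambda$ argument requires delicate Calder\'on--Zygmund decomposition and Whitney-type covering estimates on the circle, and it is here that the genuine depth of the theorem resides.
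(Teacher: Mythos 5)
This is a quoted external theorem: the paper offers no proof of the Hunt--Muckenhoupt--Wheeden result and simply imports it from \cite{Hunt1973Weighted}, so there is no internal argument to compare against. Judged on its own, your outline is the standard and correct route --- the same one as in the original paper of Hunt, Muckenhoupt, and Wheeden (as streamlined by Coifman--Fefferman): reduce norm convergence to uniform $L^{2}(w)$ bounds on $S_{M}$ by Banach--Steinhaus and density of trigonometric polynomials in the finite measure $w\,dx$, identify that with boundedness of the conjugate function, prove necessity of $A_{2}$ by testing on $w^{-1}\mathbf{1}_{I}$, and prove sufficiency via reverse H\"older, the weighted maximal theorem, and a good-$\lambda$ inequality transferred through $A_{\infty}$. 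You correctly locate the depth in the good-$\lambda$ step.

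Two technical points deserve attention if this were to be written out. First, for the implication $(2)\Rightarrow(1)$ the hypothesis only gives $f\in L^{2}(w)$, and $S_{M}(f)$ must be meaningful; one either restricts to a dense class where Fourier coefficients exist or notes that the uniform-boundedness argument forces $\int_{0}^{1}w^{-1}\,dx<\infty$ (else one exhibits $f\in L^{2}(w)\setminus L^{1}(dx)$), after which $L^{2}(w)\subseteq L^{1}(dx)$ by Cauchy--Schwarz. Second, in the necessity test the function $w^{-1}\mathbf{1}_{I}$ may fail to lie in $L^{2}(w)$ a priori; the standard fix is to test with $(w+\epsilon)^{-1}\mathbf{1}_{I}$ or $\min(w^{-1},n)\mathbf{1}_{I}$ and pass to the limit by monotone convergence. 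Neither issue threatens the architecture of the argument.
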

For the definition of the $A_2$ condition, the reader can check out \cite{Hunt1973Weighted}.
\begin{lem}\label{almosta2}
Suppose $\mu$ is a finite Borel measure on $[0,1)$, and 

$\{ e^{2 \pi i n x} \}_{n \in \mathbb{Z}}\subseteq L^2(\mu)$ does frame reconstruction with $B$. Then:
\begin{enumerate}
    \item $\mu$ is absolutely continuous with respect to Lebesgue measure.
    \item If $g = \frac{d\mu}{dx}$, then
    \[
    B f(x) = \frac{1}{g(x)} f(x).
    \]
\end{enumerate}
\end{lem}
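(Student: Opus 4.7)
The plan is to prove the lemma in four steps, exploiting the Parseval frame structure from Theorem \ref{doesframerecprop}(5) and the shift-symmetry of the exponential system.

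First, I will establish (1) via the Parseval frame $\{\sqrt{B}\,e^{2\pi i n x}\}_{n\in\mathbb{Z}}$. For every $f\in L^2(\mu)$, Parseval gives $\|f\|^2_\mu = \sum_n |\widehat{(\sqrt{B}f)\mu}(n)|^2 < \infty$, and the Plancherel theorem on $[0,1)$ forces the complex measure $(\sqrt{B}f)\mu$ to be absolutely continuous with respect to $dx$. Since $\sqrt{B}$ has dense range in $L^2(\mu)$ (Theorem \ref{doesframerecprop}(1) plus injectivity of $\sqrt{B}$), the vectors $\sqrt{B}f$ form a dense subset. If $\mu$ had a nontrivial singular part $\mu_s$ supported on a Lebesgue-null set $S$, approximating $\chi_S$ by such vectors $g_k$ would force $g_k = 0$ $\mu_s$-a.e.\ for each $k$, yielding $\|g_k - \chi_S\|^2_{L^2(\mu)} \geq \mu_s(S) > 0$ for every $k$ — contradicting convergence. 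Hence $\mu = g\,dx$.

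Second, I will show $BV = VB$, where $V$ denotes multiplication by $e^{2\pi i x}$. Applying $V$ to the reconstruction formula, reindexing, and using that $\{Be^{2\pi i n x}\}$ is Bessel by Theorem \ref{doesframerecprop}(3) (so that the boundary terms decay in $L^2(\mu)$ since $\langle f, Be^{2\pi i n x}\rangle \in \ell^2$), one obtains
\[
Vf = \lim_M \sum_{m=-M}^{M} \langle VBf, e^{2\pi i m x}\rangle\, e^{2\pi i m x}.
\]
Applying the bounded operator $B$ termwise and comparing with Theorem \ref{doesframerecprop}(2) applied to $VBf$ yields $BVf = VBf$ for every $f$.

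Third, since $V$ is a unitary on $L^2(\mu)$ with cyclic vector $1$ — trigonometric polynomials are dense in $L^2(\mu)$ for any finite Borel measure on $[0,1)$ — its commutant is the maximal abelian algebra of multiplication operators by functions in $L^\infty(\mu)$. Therefore $B$ is multiplication by some real-valued, nonnegative $h \in L^\infty(\mu)$ that is positive $\mu$-a.e.

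The main obstacle is the final identification $h = 1/g$. Setting $F = hg \in L^1(dx)$, the reconstruction formula applied to $f = 1$ reduces to $\lim_M S_M(F) = 1$ in $L^2(g\,dx)$, where $S_M$ denotes the classical Fourier partial sums. Passing to Fejér means (Cesaro averages of $S_M$) preserves this convergence, while by the classical Fejér theorem those same means also converge to $F$ in $L^1(dx)$. Extracting a common pointwise a.e.\ subsequence forces $F = 1$ Lebesgue-a.e.\ on $\{g > 0\}$, i.e., $hg = 1$, so $h = 1/g$ as elements of $L^\infty(\mu) = L^\infty(g\,dx)$. That $1/g \in L^\infty(\mu)$ is exactly the content of Berner's bound $g \geq 1/(3\|B\|)$ on its support.
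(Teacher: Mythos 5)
Your proposal is correct, but it takes a genuinely different route from the paper's, most visibly in part (2). For part (1) both arguments rest on the same fact—a finite complex Borel measure on $[0,1)$ with square-summable Fourier--Stieltjes coefficients is absolutely continuous—but the paper applies it to the single vector $B\chi_S$ (with $S$ carrying the singular part), deduces $\langle \chi_S, B\chi_S\rangle=0$, and concludes via completeness of $\{Be^{2\pi i nx}\}$ and injectivity of $B$, whereas you apply it to all vectors in the dense range of $\sqrt{B}$ and contradict the possibility of approximating $\chi_S$; these are essentially equivalent implementations. For part (2) the paper argues directly that $(Be^{2\pi i kx})g=e^{2\pi ikx}$ $\mu$-a.e.\ by combining Carleson's theorem (applied to the $L^{2}(dx)$ function $(Be^{2\pi ikx})g$) with a $\mu$-a.e.\ convergent subsequence of the reconstruction series, and it then needs Berner's lower bound on $g$ to know multiplication by $\tfrac{1}{g}$ is bounded before extending from the exponentials to all of $L^{2}(\mu)$. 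You instead prove $BV=VB$ (your shift computation is sound: the boundary coefficients equal $\langle f, Be^{2\pi i(m-1)x}\rangle$, which vanish by Theorem \ref{doesframerecprop}(3), and the termwise application of $B$ is justified by Theorem \ref{doesframerecprop}(2) applied to $VBf$), invoke the standard fact that the commutant of multiplication by $e^{2\pi i x}$ on $L^{2}(\mu)$ is the maximal abelian algebra of $L^{\infty}(\mu)$ multiplications, and identify the symbol through Fej\'er means, so the only harmonic-analysis input is Fej\'er's theorem in $L^{1}(dx)$ rather than Carleson. Your route buys a softer analytic ingredient and makes Berner's theorem unnecessary—indeed your closing sentence is redundant, since $\tfrac{1}{g}=h\in L^{\infty}(\mu)$ is already established and the lower bound on $g$ on its support follows as a corollary—at the cost of importing the operator-algebraic commutant/maximal-abelian machinery, which the paper avoids.
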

\begin{proof}
$(1)$:

Let $$\mu = \mu_a + \mu_s$$ be the Lebesgue decomposition of the finite Borel measure \(\mu\) on \([0,1)\), where \(\mu_a \ll dx\) and \(\mu_s \perp dx\). There exists a Borel set \(S \subset [0,1)\) such that \(S\) has Lebesgue measure zero and
\[
\mu_s(S^c) = 0.
\]
Since $B$ is self-adjoint and by the frame reconstruction property,
\begin{equation}\label{acequation}
\langle \chi_S, B\chi_S \rangle = \sum_{n=-\infty}^\infty \left| \langle \chi_S, B e^{2 \pi i n x} \rangle \right|^2 = \sum_{n=-\infty}^\infty \left| \langle B \chi_S, e^{2 \pi i n x} \rangle \right|^2<\infty.
\end{equation}

Therefore, the complex measure $B\chi_{S}d\mu$ is absolutely continuous with respect to Lebesgue measure, showing $B\chi_{S}$ is zero $\mu_{s}$ almost everywhere. By equation \eqref{acequation} and since $\{Be^{2\pi i nx}\}_{n=0}^{\infty}$ is complete, 
$$\chi_{S}=0.$$

$(2)$:
Let $k\in \mathbb{Z}$ and consider by a similar calculation,
$$\langle e^{2\pi ikx},Be^{2\pi i kx}\rangle=\sum_{n=-\infty}^{\infty}|\langle Be^{2\pi ikx}, e^{2\pi i nx}\rangle|^{2}<\infty. $$
Therefore, $$\int_{0}^{1}|Be^{2\pi ikx}g|^{2}dx<\infty.$$ Now by Carleson's theorem, we have
$$\lim_{M\to \infty}\sum_{n=-M}^{M}\langle Be^{2\pi ikx}, e^{2\pi i nx}\rangle e^{2\pi i nx}\to (Be^{2\pi ikx})g$$ point-wise Lebesgue almost everywhere and also $\mu$ almost everywhere since $\mu$ is absolutely continuous with respect to Lebesgue measure. However, by the frame reconstruction condition, there is a subsequence of the above sequence that converges $\mu$-almost everywhere to $e^{2\pi i kx}$.
Now we can conclude that \begin{equation}\label{1/geq}
(Be^{2\pi ikx})g=e^{2\pi ikx},\end{equation} for $\mu$ almost every $x$. Furthermore, by the frame reconstruction property, $g$ is bounded below Lebesgue almost everywhere on its support; therefore, multiplication by $\frac{1}{g}$ defines a bounded operator on $L^{2}(\mu)$, which proves $(2)$ by completeness of $\{e^{2\pi i nx}\}_{n\in \mathbb{Z}}\subseteq L^{2}(\mu)$ and equation \eqref{1/geq}.

\end{proof}
The following is a consequence of Corollary \ref{almostframecor}:
\begin{cor}
Let $\mu$ be a finite Borel measure on $[0,1)$ and suppose $\{ e^{2 \pi i n x} \}_{n \in \mathbb{Z}} \subseteq L^2(\mu)$ does frame reconstruction with $g=\frac{d\mu}{dx}$. Then the following are equivalent:
\begin{enumerate}
    \item $\{ e^{2 \pi i n x} \}_{n \in \mathbb{Z}} \subseteq L^2(\mu)$ is a Bessel sequence.
    \item $\left\{\frac{e^{2\pi i nx}}{g} \right\}_{n\in \mathbb{Z}}\subseteq L^2(\mu)$ is a lower-semi frame.
    \item $\{ e^{2 \pi i n x} \}_{n \in \mathbb{Z}} \subseteq L^2(\mu)$ does unconditional frame reconstruction.
    \item $\{ e^{2 \pi i n x} \}_{n \in \mathbb{Z}} \subseteq L^2(\mu)$ is a frame.
    \item $\left\{\frac{e^{2\pi i nx}}{g} \right\}_{n\in \mathbb{Z}}\subseteq L^2(\mu)$ is a frame.
    \item $g$ is bounded.
\end{enumerate}
\end{cor}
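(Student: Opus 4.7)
The plan is to pivot all equivalences around condition (4), using the observation from Lemma \ref{almosta2} that $B$ is multiplication by $1/g$. The key step is (4) $\Leftrightarrow$ (6): $B$ is invertible in $B(L^2(\mu))$ if and only if multiplication by $g$ is bounded, i.e.\ $g\in L^\infty$; $\mu$-essential boundedness coincides with Lebesgue-essential boundedness here since, by the cited theorem of Berner applied to the Bessel bound $\|B\|$ supplied by Theorem \ref{doesframerecprop}(3), $g$ is bounded below on its support, so $\mu$ and Lebesgue measure are mutually absolutely continuous there. On the other hand, a sequence doing frame reconstruction with an invertible operator must be a frame---immediate from Corollary \ref{almostframecor}(2), since invertibility gives closed range---and conversely any frame does frame reconstruction with $S^{-1}$, which equals our $B$ by the uniqueness proposition. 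Thus (4) $\Leftrightarrow$ ``$B$ invertible'' $\Leftrightarrow$ (6).

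With (4) $\Leftrightarrow$ (6) in hand, I would route each remaining condition into (4) via Corollary \ref{almostframecor}. Condition (1) is the hypothesis of Corollary \ref{almostframecor}(1), so (1) $\Rightarrow$ (4); condition (2) is literally the hypothesis of Corollary \ref{almostframecor}(3), so (2) $\Rightarrow$ (4); and (3) $\Rightarrow$ (4) is Corollary \ref{almostframecor}(4), once one checks $\inf_n\|Bf_n\|_\mu>0$, which holds because $\|Bf_n\|_\mu^2=\int_0^1 g^{-1}\,dx$ is the same finite positive constant for every $n$ (finite because $B$ is bounded on $L^2(\mu)$, positive because $g<\infty$ Lebesgue a.e.). Finally (5) says $\{Bf_n\}$ is a frame, hence a lower semi-frame, so (5) $\Rightarrow$ (2) $\Rightarrow$ (4).

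The reverse implications from (4) are standard frame theory: (4) $\Rightarrow$ (1) is the Bessel property of frames; (4) $\Rightarrow$ (3) is the unconditional convergence of the canonical frame expansion; and (4) $\Rightarrow$ (5) follows because uniqueness of $B$ forces $B=S^{-1}$, so $\{Bf_n\}=\{S^{-1}f_n\}$ is the canonical dual frame; then (4) $\Rightarrow$ (2) is inherited from (5). The main potential obstacle is really just keeping the bookkeeping straight between $\mu$- and Lebesgue-essential bounds on $g$ in the opening equivalence (4) $\Leftrightarrow$ (6); after that, everything else is a direct invocation of results already proved in the paper.
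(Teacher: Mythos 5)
Your proposal is correct and follows essentially the same route as the paper, which proves the corollary simply by citing Corollary \ref{almostframecor} (together with Lemma \ref{almosta2}'s identification of $B$ as multiplication by $1/g$); your write-up is just a detailed expansion of that argument, pivoting each condition through the frame property via the relevant parts of Corollary \ref{almostframecor} and the uniqueness of $B$. The only cosmetic slip is writing $\|Bf_n\|_{\mu}^{2}=\int_{0}^{1}g^{-1}\,dx$ rather than $\int_{\{g>0\}}g^{-1}\,dx$, but your actual justifications (boundedness of $B$ for finiteness, $g<\infty$ a.e.\ for positivity) do not depend on this and the argument stands.
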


What follows is our classification of finite Borel measures $\mu$ on $[0,1)$ that have the property that $\{ e^{2 \pi i n x} \}_{n \in \mathbb{Z}} \subseteq L^2(\mu)$ is a Schauder basis that does frame reconstruction:
\begin{thm}
Let $\mu$ be a finite Borel measure on $[0,1)$. Then $\{e^{2\pi i n x}\}_{n\in\mathbb{Z}} \subseteq L^{2}(\mu)$ is a Schauder basis that does frame reconstruction if and only if $\mu$ is absolutely continuous with respect to Lebesgue measure, with density $g = \tfrac{d\mu}{dx}$ such that $g$ is bounded below and satisfies the $A_{2}$ condition.

Furthermore, if the above condition is satisfied, the following are equivalent:
\begin{enumerate}
    \item $\{ e^{2 \pi i n x} \}_{n \in \mathbb{Z}} \subseteq L^2(\mu)$ is a Bessel sequence.
    \item $\left\{\frac{e^{2\pi i nx}}{g} \right\}_{n\in \mathbb{Z}}\subseteq L^2(\mu)$ is a lower-semi frame.
    \item $\{ e^{2 \pi i n x} \}_{n \in \mathbb{Z}} \subseteq L^2(\mu)$ is an unconditional Schauder basis.
     \item $\left\{\frac{e^{2\pi i nx}}{g} \right\}_{n\in \mathbb{Z}}\subseteq L^2(\mu)$ is an unconditional Schauder basis.
    \item $\{ e^{2 \pi i n x} \}_{n \in \mathbb{Z}} \subseteq L^2(\mu)$ is a Riesz basis.
    \item $\left\{\frac{e^{2\pi i nx}}{g} \right\}_{n\in \mathbb{Z}}\subseteq L^2(\mu)$ is a Riesz basis.
    \item $g$ is bounded.
\end{enumerate}

\end{thm}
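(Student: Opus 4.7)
The plan splits into the main biconditional (A) and the seven-way equivalence list (B).

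For (A), in the forward direction I would invoke Lemma \ref{almosta2}: frame reconstruction already forces $\mu \ll dx$ with density $g$ bounded below on its support, and the reconstruction operator $B$ is multiplication by $1/g$. Since $1/g \in L^\infty$, each $e^{2\pi i n x}/g$ lies in $L^2(\mu)$, and a direct integration shows $\langle e^{2\pi i n x}, e^{2\pi i k x}/g\rangle_\mu = \delta_{nk}$. By uniqueness of coordinate functionals for a Schauder basis, the biorthogonal system furnished by the Schauder hypothesis must coincide with $\{e^{2\pi i n x}/g\}$; crucially, $\langle f, e^{2\pi i n x}/g\rangle_\mu = \int_0^1 f\, e^{-2\pi i n x}\,dx = \hat f(n)$ is the classical Fourier coefficient. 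The Schauder basis expansion therefore reads $f = \lim_M S_M(f)$ in $L^2(\mu)$, and Hunt--Muckenhoupt--Wheeden delivers the equivalence with $g \in A_2$. The converse runs this in reverse: if $g$ is bounded below and $A_2$, take $B$ to be multiplication by $1/g$ (bounded, positive, self-adjoint), observe $e^{2\pi i n x}/g \in L^2(\mu)$, and use HMW to produce the Schauder basis expansion and the frame reconstruction formula simultaneously.

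For (B), the plan rests on one structural observation: both $\|e^{2\pi i n x}\|_\mu = \sqrt{\mu([0,1))}$ and $\|e^{2\pi i n x}/g\|_\mu = \sqrt{\int (1/g)\,dx}$ are positive finite constants (the latter being finite because $g$ is bounded below). Gohberg's theorem therefore collapses ``unconditional Schauder basis'' into ``Riesz basis'' for either system, giving (3) $\Leftrightarrow$ (5) and (4) $\Leftrightarrow$ (6); duality of Riesz bases on the biorthogonal pair yields (5) $\Leftrightarrow$ (6). For (1) $\Leftrightarrow$ (7), the Bessel inequality for $\{e^{2\pi i n x}\}$ unfolds via Parseval to $\int |f|^2 g^2\,dx \le C \int |f|^2 g\,dx$, equivalent to $g \in L^\infty$. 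For (2) $\Leftrightarrow$ (7), the lower-semi-frame inequality for $\{e^{2\pi i n x}/g\}$ unfolds (using $L^2(\mu) \subseteq L^2([0,1))$ provided by $g$ bounded below) to $A \int |f|^2 g\,dx \le \int |f|^2\,dx$, again equivalent to $g \in L^\infty$. Finally, Corollary \ref{almostframecor} closes the loop: Bessel plus frame reconstruction gives a frame, and a Schauder basis that is a frame is a Riesz basis, so (1) $\Rightarrow$ (5).

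The chief obstacle I anticipate is the identification of the biorthogonal system with $\{e^{2\pi i n x}/g\}$, since this is what reduces the abstract Schauder expansion to convergence of classical Fourier partial sums in $L^2(\mu)$ and opens the door to HMW. Once this alignment is in place, the remainder of the argument is bookkeeping built on tools already developed in the paper: Lemma \ref{almosta2}, Corollary \ref{almostframecor}, Gohberg's theorem, and Hunt--Muckenhoupt--Wheeden.
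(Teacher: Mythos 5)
Your overall route—Lemma \ref{almosta2} to get $\mu\ll dx$ and $B=$ multiplication by $1/g$, identification of the Schauder coefficient functionals with classical Fourier coefficients, Hunt--Muckenhoupt--Wheeden for the $A_2$ part, and Gohberg plus Corollary \ref{almostframecor} for the equivalence list—is the same as the paper's. However, the forward direction has a genuine gap: you never prove that $g>0$ Lebesgue-almost everywhere (equivalently, that the support of $g$ has full measure), and you use it silently. Your ``direct integration'' actually gives $\langle e^{2\pi inx}, e^{2\pi ikx}/g\rangle_\mu=\int_0^1 e^{2\pi i(n-k)x}\chi_{\{g>0\}}\,dx$, which equals $\delta_{nk}$ only if $\{g>0\}$ has measure one; likewise $\langle f, e^{2\pi inx}/g\rangle_\mu=\int_{\{g>0\}}f\,e^{-2\pi inx}\,dx$ is the classical Fourier coefficient only under full support. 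This point cannot be absorbed into the HMW step: Lemma \ref{almosta2} (via the quoted theorem of Berner) only yields $g$ bounded below \emph{on its support}, while the theorem asserts $g$ bounded below on $[0,1)$, and an $A_2$ weight is positive a.e., so if $g$ vanished on a set of positive Lebesgue measure the desired conclusion would be false. As written, the step that is supposed to launch the argument presupposes part of what must be proved.

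The paper closes exactly this gap by running the biorthogonality the other way: since $\{e^{2\pi inx}\}$ is a Schauder basis doing frame reconstruction with $B$, uniqueness of Schauder coefficients gives $\langle e^{2\pi inx}, Be^{2\pi ikx}\rangle_\mu=\delta_{nk}$; with $B$ equal to multiplication by $1/g$, this says the complex measure $Be^{2\pi ikx}\,d\mu=e^{2\pi ikx}\chi_{\{g>0\}}\,dx$ has the same Fourier--Stieltjes coefficients as $e^{2\pi ikx}\,dx$, so by uniqueness of such coefficients $g\,Be^{2\pi ikx}=e^{2\pi ikx}$ Lebesgue-a.e., forcing $g>0$ a.e. Once that is in place, your identification of the expansion with $S_M(f)$, the HMW application, and the upgrade from ``bounded below on the support'' to ``bounded below'' all go through, and your handling of the seven-way list (constant norms of both systems plus Gohberg to get (3)$\Leftrightarrow$(5) and (4)$\Leftrightarrow$(6), Riesz-basis duality for (5)$\Leftrightarrow$(6), the Parseval computations for (1)$\Leftrightarrow$(7) and (2)$\Leftrightarrow$(7), and Corollary \ref{almostframecor} for (1)$\Rightarrow$(5), with the trivial remark that a Riesz basis is Bessel to close the cycle) is correct and in fact more explicit than the paper's terse appeal to its Corollary.
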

\begin{proof}
Suppose that $\{ e^{2 \pi i n x} \}_{n \in \mathbb{Z}}$ is a Schauder basis that does frame reconstruction with $B$. Then it is easy to see that 
$$\langle e^{2 \pi i n x},  Be^{2 \pi i k x}\rangle=\delta_{nk}.$$ Now note that for each $k\in \mathbb{Z}$, the complex measure $Be^{2\pi i kx}d\mu$ has the same Fourier coefficients as the complex measure $e^{2\pi ikx}dx$. Therefore by Lemma \ref{almosta2},
$$g(x)Be^{2\pi ikx}=e^{2\pi ikx}$$ for Lebesgue almost every $x$ where $g = \tfrac{d\mu}{dx}$. In particular, $$g(x)>0$$ for Lebesgue almost every $x$. It follows, that $$\lim_{M\to \infty} S_{M}(f)=f$$ for all $f\in L^{2}(\mu)$ where $S_{M}$ denotes the classic Fourier partial sums of $f$. Therefore, $g$ satisfies the $A_{2}$ condition. Also, $g$ is bounded below by the frame reconstruction property and since $g(x)>0$ for Lebesgue almost every $x$.

Now suppose that $\mu$ is absolutely continuous with respect to Lebesgue measure, with density $g = \tfrac{d\mu}{dx}$ such that $g$ is bounded below and satisfies the $A_{2}$ condition. It follows that $$\lim_{M\to \infty} S_{M}(f)=f$$ for all $f\in L^{2}(\mu)$. Therefore, $\{ e^{2 \pi i n x} \}_{n \in \mathbb{Z}} \subseteq L^2(\mu)$ does frame reconstruction with $B$ where $B$ is multiplication by $\frac{1}{g}$. Finally, it is easy to check that $$\langle e^{2 \pi i n x},  Be^{2 \pi i k x}\rangle=\delta_{nk},$$ which shows that $\{ e^{2 \pi i n x} \}_{n \in \mathbb{Z}} \subseteq L^2(\mu)$ is a Schauder basis.

Finally, under the condition, statements 1-5 follow from Corollary \ref{almostframecor}.
\end{proof}

\section*{Acknowledgments}
I thank Dr.\ Eric S.\ Weber for his ideas and insights. This work is partly based on our collaborations, and my classification of sequences that do frame reconstruction was inspired by his idea of a densely defined frame operator.

\printbibliography

\end{document}